\def\R{\Bbb{R}}
\newtheorem{theorem}{Theorem}
\newtheorem{prop}[theorem]{Proposition}
\newtheorem{lemma}[theorem]{Lemma}
\newtheorem{remark}[theorem]{Remark}
\newtheorem{corollary}[theorem]{Corollary}
\newtheorem{definition}[theorem]{Definition}
\title{Some Inversion Formulas for the Cone Transform}
\author{Fatma Terzioglu\thanks{Department of Mathematics, Texas A$\&$M University, College Station, TX 77843-3368, USA, e-mail: fatma@math.tamu.edu}}
\date{\vspace{5ex}}
\begin{document}
\maketitle
\begin{abstract}
Several novel imaging applications have lead recently to a variety of Radon type transforms, where integration is done over a family of conical surfaces. We call them \emph{cone transforms} (in 2D they are also called \emph{V-line} or \emph{broken ray} transforms). Most prominently, they are present in the so called Compton camera imaging that arises in medical diagnostics, astronomy, and lately in homeland security applications. Several specific incarnations of the cone transform have been considered separately. In this paper, we address the most general (and overdetermined) cone transform, obtain integral relations between cone and Radon transforms in $\mathbb{R}^n$, and a variety of inversion formulas. In many applications (e.g., in homeland security), the signal to noise ratio is very low. So, if overdetermined data is collected (as in the case of Compton imaging), attempts to reduce the dimensionality might lead to essential elimination of the signal. Thus, our main concentration is on obtaining formulas involving overdetermined data.
\end{abstract}

\section{Introduction}
In this paper, we study the so called \emph{cone transform}, where a function on $\R^n$ is integrated over various conical surfaces (in 2D, the names \emph{V-line transform} and \emph{broken ray transform} are also used). Such transforms arise in a variety of new imaging techniques, e.g. in optical imaging \cite{Florescu}, but most prominently in the so called \emph{Compton camera imaging}, which we will briefly explain now.
The conventional gamma cameras used in medical SPECT(Single Photon Emission Tomography) imaging determine the direction of an incoming $\gamma$-photon by "collimating'' the detector (see Fig. \ref{fig:collimation&scatter}(left)). This considerably decreases the efficiency, because only a small portion of the incoming $\gamma$-rays passes through the collimator \cite{Basko}. Thus, the acquired signal is weak and statistically noisy. The situation is similar in astronomy and even more severe in homeland security applications \cite{KuchCBMS,ADHKK,ACCHKOR,Xun}.

On the other hand, Compton cameras utilize Compton scattering (see Fig. \ref{fig:collimation&scatter}(right)) and use electronic rather than mechanical collimation to provide simultaneous multiple views of the object and dramatic increase in sensitivity \cite{Singh}.
\begin{figure}[H]
        \centering
        \begin{subfigure}[b]{0.4\textwidth}
                \includegraphics[width=\textwidth]{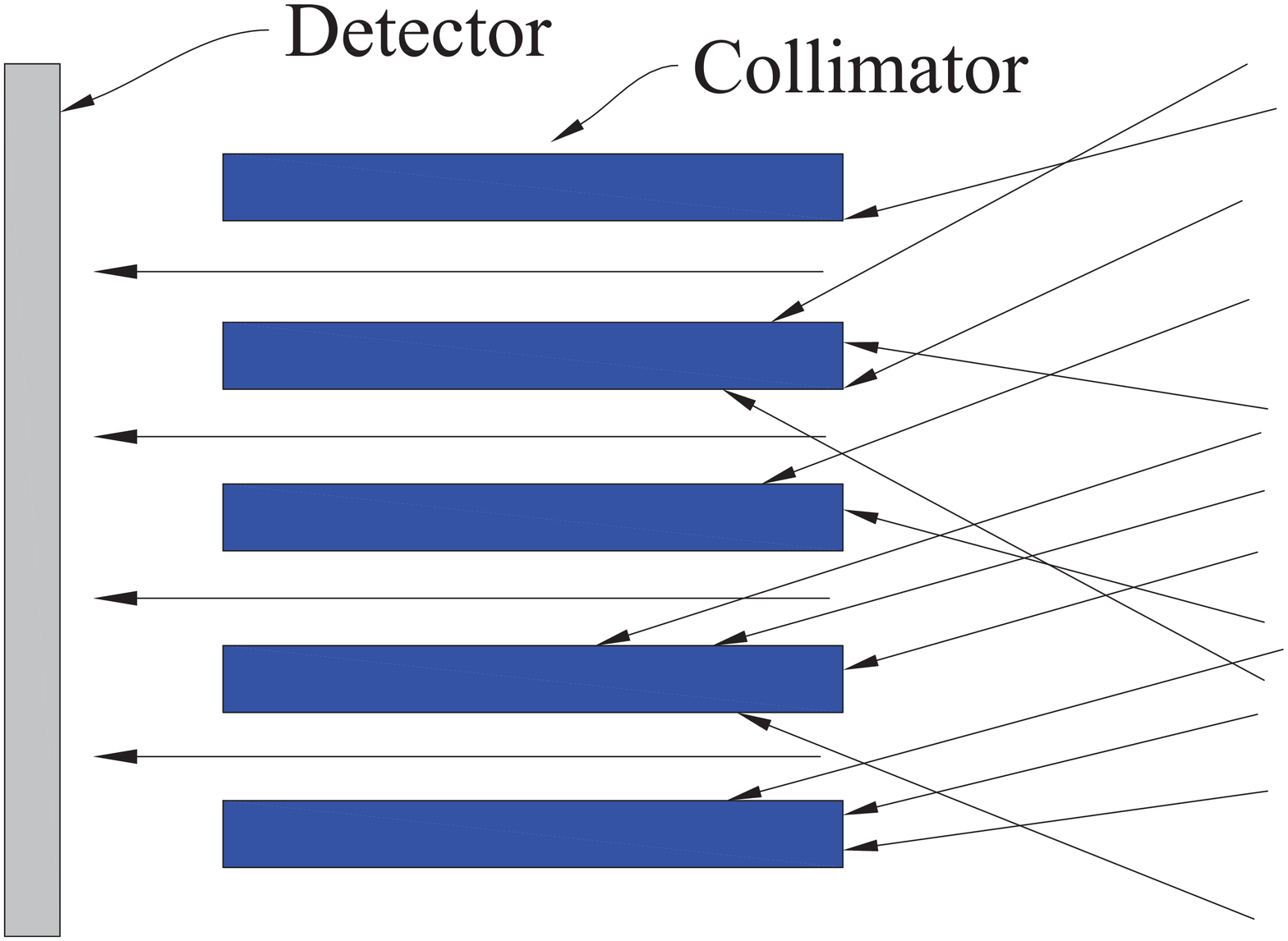}
        \end{subfigure}
        \begin{subfigure}[b]{0.4\textwidth}
                \includegraphics[width=\textwidth]{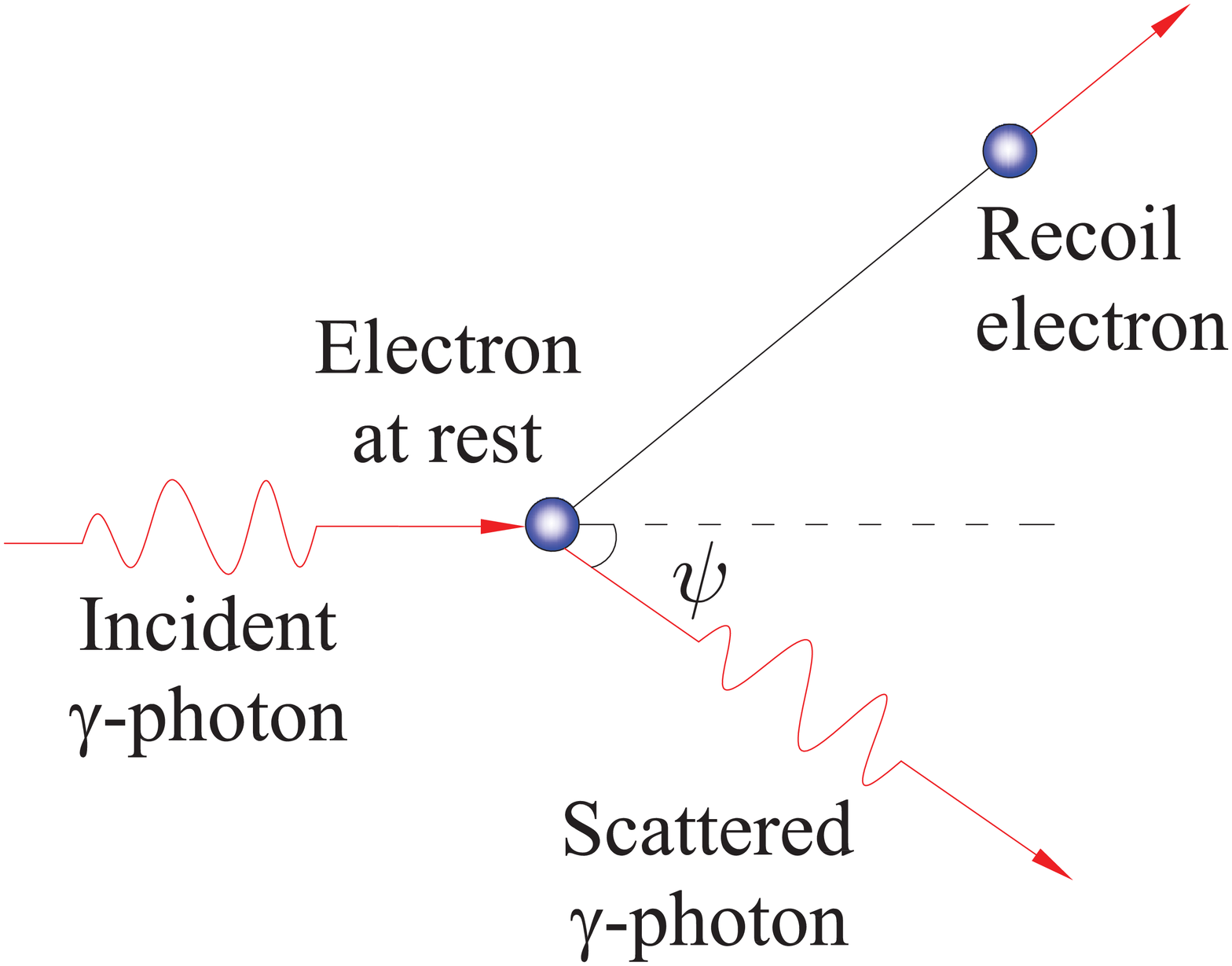}
        \end{subfigure}
        \caption{Left: Collimation. Right: Compton Scattering.}\label{fig:collimation&scatter}
\end{figure}

A Compton camera consists of two parallel detectors (see Fig. \ref{fig:compton camera}). When the photon hits the first detector, where its position $u$ and energy $E_1$ are recorded, it undergoes Compton scattering. Then, it is absorbed in the second detector where its position $v$ and energy $E_2$ are again measured. The scattering angle $\psi$ and a unit vector $\beta$ are calculated from the data as follows (see e.g. \cite{Todd}):
\begin{equation}
\cos\psi=1-\frac{mc^2E_1}{(E_1+E_2)E_2} \quad \quad \quad \quad \beta=\frac{u-v}{|u-v|}.
\end{equation}
Here, $m$ is the mass of the electron and $c$ is the speed of light.

From the knowledge of the scattering angle $\psi$ and the vector $\beta$, we conclude that the photon originated from the surface of the cone with central axis $\beta$, vertex $u$ and opening angle $\psi$ (see Fig. \ref{fig:compton camera}). Therefore, although the exact incoming direction of the detected particle is not available, one knows a surface cone of such possible directions.  One can argue that the data provided by Compton camera are integrals of the distribution of the radiation sources over conical surfaces having vertex at the detector. The operator that maps source intensity distribution function $f(x)$ to its integrals over these cones is called the \emph{cone} or \emph{Compton transform}. The goal of Compton camera imaging is to recover source distribution from this data \cite{ADHKK}.
\begin{figure}[H]
\begin{center}
\includegraphics[width=3.3in,height=2.5in]{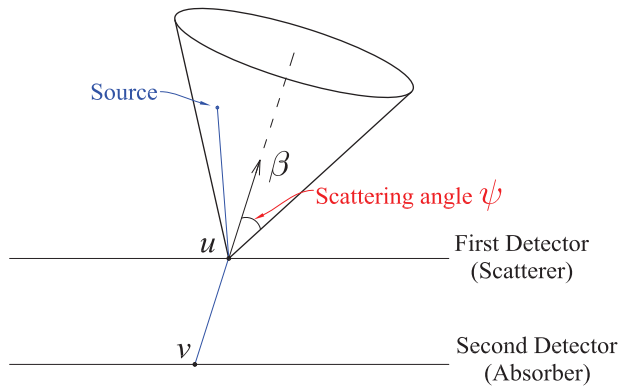}
\caption{Schematic representation of a Compton camera.}
\label{fig:compton camera}
\end{center}
\end{figure}
In the Compton camera imaging applications mentioned above, the vertex of the cone is located on the detector plane, while in other applications vertices are not restricted, although some other conditions are imposed on the cones. We thus find it useful to understand analytic properties of a more general cone transform, where no restriction on the vertex location is imposed. This is the transform addressed in this text with the hope that it can be useful for more restricted versions. As for instance Remark \ref{R:comptonappl} shows, one indeed arrives at applications to the Compton imaging\footnote{It is planned to address these applications in detail elsewhere.}.

The problem of inverting the cone transform is over-determined. For instance, the space of 2D cones with vertices on a linear detector array is three-dimensional, and the space of 3D cones with vertices on a detector surface is five-dimensional. Without the restriction on the vertex, the dimensions are correspondingly four and six. One thus is tempted to restrict the set of cones, in order to get a non-over-determined problem. There exist several inversion formulas of this type (e.g. \cite{Basko, Cree, Moon, NgTr2005}). However, as we have already mentioned, when the signals are weak (e.g. in homeland security applications (e.g., \cite{ADHKK}), restricting the data would lead to essential elimination of the signal. We thus intend to use the full data set.

Probably, the first known analytical reconstruction formula in 3D was given in \cite{Cree}, where the authors considered cones with vertical axis only. The papers \cite{Basko, JungMoon} contain spherical harmonics expansion solutions. Another inversion formula for cone transforms on cones having fixed central axis and variable opening angle is provided in \cite{NgTr2005}. The paper \cite{Smith} presents two reconstruction methods for two Compton data models. The complete set of data was used in \cite{Maxim, Maxim2014}. Inversion formulas for $n$-dimensional cone transform over vertical cones are provided in \cite{ Gouia-Zarrad, Haltmeier}. All these works only addressed the cones with the vertex on the detector. Inversion algorithms for various 2D cone transforms are given in \cite{Basko, Florescu, Gouia-Zarrad-Ambarts, Hristova, Morvidone}.

In this paper, we derive various inversion formulas\footnote{The reader should recall that it is common to have a variety of different inversion formulas for Radon type transforms, which are all the same for perfect data, but react differently to unavoidable errors in data \cite{Natt_old,KuchCBMS}. Having such a variety is even more important when dealing with overdetermined data, as in Compton imaging.} for the full data cone transform in $\mathbb{R}^n$. In Section 2, we define the cone transform and state its basic properties. In Section 3, we obtain an integral relation between the cone and Radon transforms in $\mathbb{R}^n$ and deduce from it an inversion formula for the cone transform. In Section 4, we provide a different inversion formula derived from another integral relation between the cone and Radon transforms in $\mathbb{R}^n$. Both of these formulas provide reconstructions only at vertices of the cones which is an inconvenience for Compton imaging. However, the integral relation provided in Section 4 also enables us to associate the cone transform with the cosine transform. This result is given in Section 5, and through this relation, we obtain the Radon transform explicitly in terms of the cone transform in Theorem 14 which leads to a variety of inversion algorithms from Compton data as discussed in Remark 15. The results of a numerical simulation for $n=2$ are also provided. In Section 6, we investigate the relationship between the cone transform and spherical harmonics. Finally, we prove some auxiliary technical results in Section 7.

\section{Definition and Basic Properties of the Cone Transform}
A round cone in $\mathbb{R}^n$ can be parametrized by a tuple $(u, \beta, \psi)$, where $u \in \mathbb{R}^n$ is the cone vertex, vector $\beta \in S^{n-1}$ is directed along the cone's central axis, and $\psi \in (0,\pi)$ is the opening angle of the cone (see Fig. \ref{fig:compton camera}). Then, a point $x \in \mathbb{R}^n$ lies on the cone iff
\begin{equation}\label{cone eqn}
 (x-u)\cdot\beta=|x-u|\cos \psi.
\end{equation}

The \emph{$n$-dimensional cone transform}  $C$ maps a function $f$ into the set of its integrals over the circular cones in $\mathbb{R}^n.$ Explicitly,
\begin{equation}
\label{cone trans}
 Cf(u,\beta,\psi)=\int\limits_{(x-u)\cdot\beta=|x-u|\cos \psi}f(x)dx
\end{equation}
where $dx$ is the surface measure on the cone.

The \emph{$n$-dimensional vertical cone transform} maps a function $f$  into the set of its integrals over the cones having central axis parallel to the $x_n$-axis, and thus the vector $\beta$ is equal to $e_n=(0,...,0,1) \in \mathbb{R}^n$. It can be written in terms of the spherical coordinates. Namely,
\begin{equation}
\label{nD_vertical cone}
Cf(u, e_n, \psi)=\int\limits_0^\infty \int\limits_{S^{n-2}} f(u+ \rho((\sin\psi) \omega,\cos \psi))(\rho \sin\psi)^{n-2}d\omega d\rho.
\end{equation}

In two dimensions, the equation \eqref{cone eqn} describes two rays with a common vertex (see Fig. \ref{fig:2Dcone}). A cone in two dimensions can be parametrized by a point $u \in \mathbb{R}^2$ that serves as its vertex, an opening angle $\psi \in (0, \pi)$ and a vector $\beta=\beta(\phi)=(\sin\phi, \cos \phi) \in S^1$ directed along the central axis.
\begin{figure}[H]
\begin{center}
\includegraphics[width=2.6in,height=1.8in]{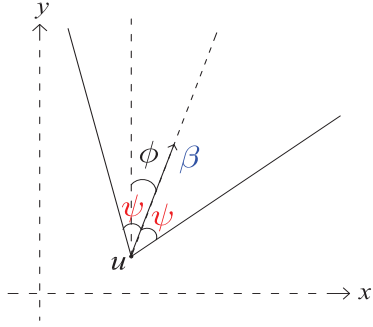}
\caption{A Cone in 2-dimensions.}
\label{fig:2Dcone}
\end{center}
\end{figure}

Then, the 2D cone transform of a function $f \in \mathcal{S}(\mathbb{R}^2)$ is given by
\begin{align}
\label{2D_cone}
\begin{split}
   Cf(u, \beta, \psi)= Cf(u, \beta(\phi), \psi)&=\int\limits_0^\infty f(u+r(\sin(\psi+\phi),\cos(\psi+\phi)))dr  \\
    &+ \int\limits_0^\infty f(u+r(-\sin(\psi-\phi),\cos(\psi-\phi)))dr.
\end{split}
\end{align}

As a straightforward calculation shows, analogously to the Radon transform, cone transform has an evenness property, and is shift and rotation invariant:
\begin{lemma} \label{Properties} Let $f \in \mathcal{S}(\mathbb{R}^n)$, $u \in \mathbb{R}^n$, $\beta \in S^{n-1}$ and $\psi \in (0,\pi)$. Then,
 \begin{enumerate}
 \item
  \begin{equation}\label{evenness}
  Cf(u,-\beta,\psi)=Cf(u,\beta,\pi-\psi).
  \end{equation}
  \item Let $T_a$ be the translation operator in $\mathbb{R}^n$, defined as $T_af(x)=f(x+a)$ for $a \in \mathbb{R}^n$. We define $$T_a(Cf)(u,\beta,\psi):=Cf(u+a,\beta,\psi).$$
  Then,
  $$T_aC=CT_a.$$
  \item Let $A$ be an $n\times n$ rotation matrix and $M_Af(x)=f(Ax)$ be the corresponding rotation operator. We define
  $$M_A(Cf)(u,\beta,\psi):=Cf(Au,A\beta,\psi).$$
  Then,
  $$M_AC=CM_A.$$
 \end{enumerate}
\end{lemma}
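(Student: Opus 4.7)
The plan is to deduce each of the three claims directly from the defining equation \eqref{cone eqn} of the cone surface together with the invariance of the surface measure under the natural transformations, so the argument in each case reduces to a change of variables in the integral \eqref{cone trans}.

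For (i), I would first observe that the defining equation for a point $x$ to lie on the cone with parameters $(u,-\beta,\psi)$ is $(x-u)\cdot(-\beta)=|x-u|\cos\psi$, which I can rewrite as $(x-u)\cdot\beta = -|x-u|\cos\psi = |x-u|\cos(\pi-\psi)$. Since this is precisely the defining equation of the cone with parameters $(u,\beta,\pi-\psi)$, the two conical surfaces coincide as sets, and they carry the same surface measure. The integrals over them are therefore equal, which is \eqref{evenness}.

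For (ii), I would unwind the definitions: $T_a(Cf)(u,\beta,\psi)=Cf(u+a,\beta,\psi)$ is the integral of $f(x)$ over the surface $(x-u-a)\cdot\beta=|x-u-a|\cos\psi$. The substitution $y=x-a$ is a translation, hence preserves the surface measure, and transforms the defining equation into $(y-u)\cdot\beta=|y-u|\cos\psi$. The integrand becomes $f(y+a)=T_af(y)$, and we obtain $C(T_af)(u,\beta,\psi)$. For (iii), the analogous substitution is $y=A^{-1}x$, which is an isometry, so it preserves the surface measure and the Euclidean norm $|x-Au|=|y-u|$. Since $A$ is orthogonal, $(x-Au)\cdot A\beta=A(y-u)\cdot A\beta=(y-u)\cdot\beta$, so the surface over which we integrate again becomes the cone $(u,\beta,\psi)$, while the integrand $f(x)$ becomes $f(Ay)=M_Af(y)$; this yields $C(M_Af)(u,\beta,\psi)$.

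There is no real obstacle here; the only point that deserves a brief comment is that in (ii) and (iii) we are implicitly using that the surface measure on the cone is invariant under translations and rotations. Because the cone is a smooth hypersurface away from its vertex and these maps are Euclidean isometries, this invariance is standard, so I would simply note it in one line rather than computing the Jacobian explicitly.
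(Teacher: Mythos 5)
Your proof is correct; all three parts follow exactly as you describe, by identifying the defining equation \eqref{cone eqn} of the cone $(u,-\beta,\psi)$ with that of $(u,\beta,\pi-\psi)$ for part (i), and by the measure-preserving changes of variables $y=x-a$ and $y=A^{-1}x$ for parts (ii) and (iii). The paper itself omits the argument, dismissing it as ``a straightforward calculation,'' and your write-up is precisely that intended calculation, so there is nothing to reconcile.
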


\section{Inversion of the Cone Transform}
 In the following, we investigate the relation between the cone and Radon transforms and provide various analytical inversion formulas for the $n$-dimensional cone transform.

 We first recall that the n-dimensional Radon transform $R$ maps a function $f$ on $\mathbb{R}^n$ into the set of its integrals over the hyperplanes of $\mathbb{R}^n$. Namely, if $\omega \in S^{n-1}$ and $s \in \mathbb{R}$,
 \begin{equation}
\label{Def of Radon}
Rf(\omega, s)= \int\limits_{x \cdot \omega=s}f(x)dx.
\end{equation}
In this setting, the Radon transform of $f$ is the integral of $f$ over the hyperplane orthogonal to $\omega$ with signed distance $s$ from the origin.

The Radon transform is invertible on  $\mathcal{S}(\mathbb{R}^n)$, namely
\begin{equation}\label{inverse_radon}
 f=\frac{1}{2}(2\pi)^{1-n}I^{-\alpha}R^\#I^{\alpha-n+1}Rf, \quad \quad \alpha < n.
\end{equation}
Here, $R^\#$ is the back projection operator, and $I^\alpha$, $\alpha<n$, is the \emph{Riesz potential} acting on a function $f(u)$ as
$$\widehat{(I^\alpha f)}(\xi)=|\xi|^{-\alpha}\hat{f}(\xi),$$
where $\hat{f}$ is the Fourier transform of $f$. For instance, when $n$ is odd, $I^{1-n}$ is simply the differential operator
$$I^{1-n}=(-\Delta)^{(n-1)/2}$$
with $\Delta$ being the Laplacian (see e.g. \cite{Natt_old}).

\begin{theorem}\label{Inversion Theorem1}
Let $f \in \mathcal{S}(\mathbb{R}^n)$. Then,
\begin{enumerate}
 \item For any $u \in \mathbb{R}^n$ and $\beta \in S^{n-1}$, we have
 \begin{equation}\label{int_rel1}
  \int\limits_0^\pi Cf(u,\beta,\psi)d\psi=\frac{\Gamma(\frac{n-1}{2})}{2\pi^{(n-1)/2}}\int\limits_{S^{n-1}}Rf(\omega, u\cdot \omega)d\omega=\frac{\Gamma(\frac{n-1}{2})}{2\pi^{(n-1)/2}}R^\# Rf(u).
  \end{equation}
 \item Let a function $\mu: S^{n-1} \to \mathbb{R}$ be such that $\int\limits_{S^{n-1}} \mu(\beta)d\beta=1$. For any $f \in \mathcal{S}(\mathbb{R}^n)$,
 \begin{equation}\label{inversion formula}
f(u)=\frac{\pi^{-n/2}\Gamma(\frac{n}{2})}{2\Gamma(n-1)}\int\limits_{S^{n-1}}\int\limits_0^\pi I^{1-n}Cf(u, \beta, \psi)\mu(\beta) d\psi d\beta.
\end{equation}
\end{enumerate}
\end{theorem}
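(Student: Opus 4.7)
The plan is to reduce both parts to a polar decomposition of $S^{n-1}$ around the axis $\beta$, followed by a standard manipulation of the backprojection $R^\#R$.

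For part (i), I would first write out $Cf(u,\beta,\psi)$ in a parametrization that separates the ``axial angle'' $\psi$ from the azimuthal direction on $S^{n-2}$. Writing a point on the cone as $x = u + \rho\eta$ with $\rho>0$ and $\eta = \cos\psi\,\beta + \sin\psi\,\omega$ where $\omega \in S^{n-2} \cap \beta^\perp$, the induced surface measure becomes $(\rho\sin\psi)^{n-2}\,d\omega\,d\rho$. Substituting this into the definition of the cone transform and then integrating against $d\psi$ over $(0,\pi)$, the crucial observation is that the pair $(\psi,\omega) \mapsto \cos\psi\,\beta + \sin\psi\,\omega$ is exactly the polar parametrization of $S^{n-1}$ with north pole $\beta$, whose Jacobian is precisely $\sin^{n-2}\psi$. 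The $\sin^{n-2}\psi$ factors therefore conspire perfectly and yield
\begin{equation*}
\int_0^\pi Cf(u,\beta,\psi)\,d\psi = \int_{S^{n-1}}\int_0^\infty f(u+\rho\eta)\,\rho^{n-2}\,d\rho\,d\eta,
\end{equation*}
which is visibly independent of $\beta$. The remaining task is to identify the right-hand side with $R^\# Rf(u)$ up to a constant. This is routine: in $R^\# Rf(u) = \int_{S^{n-1}}\int_{x\cdot\omega = u\cdot\omega}f(x)\,dx\,d\omega$, one substitutes $x = u + r\theta$ with $r>0$ and $\theta \in S^{n-2}\cap \omega^\perp$, then swaps the order of integration so that $\theta$ ranges over $S^{n-1}$ and $\omega$ ranges over $S^{n-2}\cap \theta^\perp$. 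The inner integral contributes the factor $|S^{n-2}|$, yielding $R^\# Rf(u) = |S^{n-2}|\int_{S^{n-1}}\int_0^\infty f(u+r\theta)\,r^{n-2}\,dr\,d\theta$, and the claimed constant drops out of $|S^{n-2}|^{-1} = \Gamma((n-1)/2)/(2\pi^{(n-1)/2})$.

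For part (ii), I would leverage the fact, visible from the derivation above, that the left-hand side of \eqref{int_rel1} does not depend on $\beta$. Hence, for any $\mu \colon S^{n-1}\to \mathbb{R}$ of total mass one,
\begin{equation*}
R^\# Rf(u) = \frac{2\pi^{(n-1)/2}}{\Gamma(\frac{n-1}{2})}\int_{S^{n-1}}\int_0^\pi Cf(u,\beta,\psi)\,\mu(\beta)\,d\psi\,d\beta.
\end{equation*}
Next, I would specialize the Radon inversion formula \eqref{inverse_radon} by choosing $\alpha = n-1$, which yields the clean identity $f = \tfrac{1}{2}(2\pi)^{1-n}I^{1-n}R^\# Rf$ with $I^{1-n}$ acting on the spatial variable. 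Applying $\tfrac{1}{2}(2\pi)^{1-n}I^{1-n}$ to both sides of the displayed equation and passing $I^{1-n}$ inside the $(\beta,\psi)$ integrals (permissible since it acts only on the $u$-variable and the integrals are absolutely convergent for Schwartz $f$) gives the stated formula, up to a scalar prefactor.

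The only mildly delicate step is verifying that the combined constant $\tfrac{1}{2}(2\pi)^{1-n}\cdot 2\pi^{(n-1)/2}/\Gamma((n-1)/2)$ equals the prefactor $\pi^{-n/2}\Gamma(n/2)/(2\Gamma(n-1))$ appearing in \eqref{inversion formula}; this is a direct application of the Legendre duplication formula $\Gamma(\tfrac{n-1}{2})\Gamma(\tfrac{n}{2}) = 2^{2-n}\sqrt{\pi}\,\Gamma(n-1)$. Apart from this bookkeeping, there is no conceptual obstacle: the entire argument rests on noticing that sweeping $\psi$ over $(0,\pi)$ while holding $\beta$ fixed tiles the unit sphere exactly once, so the Jacobian absorbs the $\sin^{n-2}\psi$ factor that was built into the cone's surface measure.
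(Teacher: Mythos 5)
Your argument is correct and is essentially the proof given in the paper: the same polar decomposition of $S^{n-1}$ around the axis $\beta$ in which the $\sin^{n-2}\psi$ Jacobian cancels the cone's surface-measure factor, the same identification $R^\# Rf(u)=|S^{n-2}|\int_{\mathbb{R}^n}f(u+x)|x|^{-1}\,dx$, and the same conclusion via \eqref{inverse_radon} with $\alpha=n-1$ and the duplication formula. The only cosmetic differences are that you derive the backprojection identity directly (the paper cites it from Natterer), you work with a general axis $\beta$ rather than reducing to $e_n$ by rotation invariance, and the paper treats $n=2$ by a separate explicit computation with the two-ray definition, whereas your parametrization covers that case only if one reads $S^{0}$ as two points with counting measure.
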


\begin{remark}\label{R:beta}\indent
\begin{enumerate}
\item One notices that according to (\ref{int_rel1}), the inversion formula (\ref{inversion formula}) consists of a backprojecting of the cone data, followed by a filtration (i.e., is what is called a FBP type formula).
\item One can choose $\mu(\beta)$ to be equal to a delta-function, which would eliminate integration with respect to $\beta$ in (\ref{inversion formula}). However, if the signal is very week, eliminating almost all values of $\beta$ would lead to elimination of the signal. Thus weighted integration with respect to $\beta$ allows for accounting for all data collected.
\end{enumerate}
\end{remark}
\begin{proof} We first prove the theorem for dimensions $n \geq 3$.
 \begin{align*}
 \int\limits_0^\pi &Cf(u, e_n, \psi)d\psi=
 \int\limits_0^\pi \int\limits_0^\infty \int\limits_{S^{n-2}} f(u+\rho((\sin\psi) \omega,\cos \psi))(\rho \sin\psi)^{n-2}d\omega d\rho d\psi\\
 &=\int\limits_{S^{n-1}}\int\limits_0^\infty  f(u+\rho\sigma)\rho^{n-2}d\rho d\sigma
 =\int\limits_{\mathbb{R}^n} f(u+x)|x|^{-1}dx = \frac{1}{|S^{n-2}|} R^\#Rf(u),
 \end{align*}
 The last equality is due to \cite[Chapter 2, Theorem 1.5]{Natt_old} (see also Corollary \ref{BPR}).
 As both $R$ and $R^\#$ commute with rigid motions in $\mathbb{R}^n$, we obtain for any $\beta \in S^{n-1}$,
 $$\int\limits_0^\pi Cf(u, \beta, \psi)d\psi =\frac{1}{|S^{n-2}|} R^\#Rf(u).$$
Thus, for any function $\mu$ on $S^{n-1}$ such that $\int\limits_{S^{n-1}}\mu(\beta)d\beta=1$, we have
 $$\int\limits_{S^{n-1}}\int\limits_0^\pi Cf(u, \beta, \psi)\mu(\beta) d\psi d\beta =\frac{1}{|S^{n-2}|} R^\#Rf(u)= \frac{\Gamma(\frac{n-1}{2})}{2\pi^{(n-1)/2}}R^\#Rf(u).$$
Note that the last equality follows from the area formula for the $n$-sphere, that is
 \begin{equation}\label{area of nsphere}
  |S^{n-1}|=\frac{2\pi^{n/2}}{\Gamma(\frac{n}{2})}.
 \end{equation}
 Using \eqref{inverse_radon} with $\alpha=n-1$, and utilizing the duplication formula (see e.g. \cite{Szego})
 \begin{equation}\label{duplication fmla}
\Gamma(z)\Gamma(z+\frac{1}{2})=2^{1-2z}\sqrt{\pi}\Gamma(2z),
 \end{equation}
 we conclude that
 \begin{align*}
 f(u)=\frac{\pi^{n/2}\Gamma(\frac{n}{2})}{2\Gamma(n-1)}\int\limits_{S^{n-1}}\int\limits_0^\pi I^{1-n}Cf(u, \beta, \psi)\mu(\beta) d\psi d\beta.\\
 \end{align*}

For the 2-dimensional case, we only need to provide the proof of \eqref{int_rel1}, since the rest of the proof stays the same. Assume for now that $u=0$. By definition of the 2D cone transform, we have
 \begin{align*}
 \int\limits_0^\pi Cf(0,\beta(\phi),\psi)d\psi&=\int\limits_0^\pi \int\limits_0^\infty f(r\sin(\psi+\phi),r\cos(\psi+\phi))dr d\psi \\
    &+ \int\limits_0^\pi \int\limits_0^\infty f(-r\sin(\psi-\phi),r\cos(\psi-\phi))dr d\psi.
 \end{align*}
Changing variables, we obtain
\begin{equation*}
 \int\limits_0^\pi f(r\sin(\psi+\phi),r\cos(\psi+\phi)) d\psi=\int\limits_\phi^{\pi+\phi} f(r\sin\psi,r\cos\psi) d\psi,
\end{equation*}
and
\begin{equation*}
 \int\limits_0^\pi f(-r\sin(\psi-\phi),r\cos(\psi-\phi)) d\psi=\int\limits_{-\pi+\phi}^\phi f(r\sin\psi,r\cos\psi) d\psi.
\end{equation*}
Thus,
\begin{align*}
 \int\limits_0^\pi Cf(0,\beta(\phi),\psi)d\psi=\int\limits_0^\infty \int\limits_{-\pi+\phi}^{\pi+\phi} f(r\sin\psi,r\cos\psi) d\psi dr.
\end{align*}
Changing variables by letting $\theta=\frac{\pi}{2}-\psi$ and using $2\pi$-periodicity of sine and cosine functions, we get
\begin{equation*}
\int\limits_{-\pi+\phi}^{\pi+\phi} f(r\sin\psi,r\cos\psi) d\psi
=\int\limits_{-\frac{\pi}{2}-\phi}^{\frac{3\pi}{2}-\phi} f(r\cos\theta,r\sin\theta) d\theta =\int\limits_0^{2\pi} f(r\cos\theta,r\sin\theta) d\theta.
\end{equation*}
Therefore,
$$ \int\limits_0^\pi Cf(0,\beta(\phi),\psi)d\psi= \int\limits_{0}^{2\pi}\int\limits_0^\infty f(r\cos\theta,r\sin\theta)drd\theta
=\frac{1}{2}\int\limits_0^{2\pi}Rf(\theta,0) d\theta,$$
where the last equality follows by letting $n=2$ and $p=0$ in \eqref{Asgeirsson}.
Now, using the shift invariance of both cone and Radon transforms, we conclude that
\begin{equation*}\label{integral relation_2D}
 \int\limits_0^\pi Cf(u,\beta,\psi)d\psi=\frac{1}{2}\int\limits_{S^1}Rf(\omega,u\cdot\omega) d\omega=\frac{1}{2}R^\# Rf(u),
\end{equation*}
which is \eqref{int_rel1} with $n=2$, so we are done.
\end{proof}

 \begin{corollary}For $n=3$, the formula \eqref{inversion formula} becomes
$$f(u)=-\frac{1}{4\pi}\int\limits_{S^{n-1}} \int\limits_0^\pi \Delta Cf(u,\beta,\psi)\mu(\beta) d\psi d\beta, $$
where $\Delta$ acts on the variable $u$.
\end{corollary}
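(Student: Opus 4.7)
The plan is to specialize the general inversion formula \eqref{inversion formula} of Theorem~\ref{Inversion Theorem1} to the case $n=3$. Two elementary steps are involved: evaluating the scalar prefactor and rewriting the Riesz potential $I^{1-n}$ as a concrete differential operator in three dimensions.

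First, I would compute the constant $\pi^{-n/2}\Gamma(n/2)/(2\Gamma(n-1))$ at $n=3$. Inserting the standard values $\Gamma(3/2)=\sqrt{\pi}/2$ and $\Gamma(2)=1$ collapses this to $1/(4\pi)$. Next, I would invoke the identity already recalled in Section 3, namely that for odd $n$ one has $I^{1-n}=(-\Delta)^{(n-1)/2}$. At $n=3$ this reads $I^{-2}=-\Delta$. A small but important bookkeeping point is that the Riesz potential in \eqref{inversion formula} acts on $Cf$ viewed as a function of its vertex argument, so the Laplacian differentiates with respect to $u$, not with respect to the angular parameters $\beta$ or $\psi$; this is also consistent with the way $I^{\alpha}$ is defined in the excerpt (acting on a function of a single spatial variable via its Fourier transform).

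Substituting both observations into \eqref{inversion formula} gives
$$f(u)=\frac{1}{4\pi}\int_{S^{n-1}}\int_0^\pi (-\Delta)Cf(u,\beta,\psi)\mu(\beta)\,d\psi\,d\beta,$$
and pulling the minus sign outside the integrals yields the asserted formula. I do not anticipate any genuine obstacle: the corollary is a direct substitution, and the only part worth stating carefully is that $\Delta$ acts in the vertex variable $u$ so that, by linearity, it commutes with the $d\psi\,d\beta$ integration.
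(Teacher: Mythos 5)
Your proposal is correct and is exactly the (implicit) derivation the paper intends: the corollary follows from \eqref{inversion formula} by evaluating the constant $\pi^{-3/2}\Gamma(3/2)/(2\Gamma(2))=1/(4\pi)$ and using the identity $I^{1-n}=(-\Delta)^{(n-1)/2}$, which for $n=3$ gives $I^{-2}=-\Delta$ acting in the vertex variable $u$. Your remark that the Laplacian acts on $u$ and commutes with the angular integration is the right bookkeeping point, and no further argument is needed.
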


\section{An Alternative Inversion Formula}
\label{sec:Another Inversion Formula}
For the derivation of an alternative inversion formula, we need the following relation between the cone and Radon transforms.
\begin{theorem}\label{Integral Relation}
 Let $f \in \mathcal{S}(\mathbb{R}^n)$. For any $u \in \mathbb{R}^n$ and $\beta \in S^{n-1}$, we have
\begin{equation}\label{int_rel}
\int\limits_0^\pi Cf(u,\beta,\psi)\sin\psi d\psi= \frac{\pi}{|S^{n-1}|}\int\limits_{S^{n-1}} Rf(\omega,\omega\cdot u)|\omega\cdot \beta |d\omega,
\end{equation}
where $|S^{n-1}|$ denotes the area of the sphere $S^{n-1}$.
\end{theorem}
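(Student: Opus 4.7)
The plan is to reduce to the normalized case $u = 0$ and $\beta = e_n$ using the translation and rotation invariance from Lemma \ref{Properties}, and then to express both sides as weighted integrals of $f$ on $\mathbb{R}^n$ and match their kernels. Since $Cf(u,\beta,\psi) = C(T_u f)(0,\beta,\psi)$ and $Rf(\omega, u\cdot\omega) = R(T_u f)(\omega, 0)$, replacing $f$ by $T_u f$ lets me set $u = 0$. Conjugating by a rotation sending $\beta$ to $e_n$ via Lemma \ref{Properties}(iii) then reduces to $\beta = e_n$; the weight $|\omega\cdot\beta|$ transforms correctly since the rotation acts on the outer integration variable $\omega$.

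For the left-hand side I plan to insert $u=0$, $\beta = e_n$ into (\ref{nD_vertical cone}) and use the identity $d\sigma = \sin^{n-2}\psi\,d\omega\,d\psi$ (where $\sigma = ((\sin\psi)\omega,\cos\psi)$, $\omega \in S^{n-2}$, $\psi \in (0,\pi)$) for the area measure on $S^{n-1}$. Since $\sin\psi = \sqrt{1-(\sigma\cdot e_n)^2}$ on $(0,\pi)$, switching to Cartesian coordinates $x = \rho\sigma$ should collapse the triple integral to
\begin{equation*}
\int_0^\pi Cf(0,e_n,\psi)\sin\psi\,d\psi = \int_{\mathbb{R}^n} f(x)\,\frac{\sqrt{|x|^2 - (x\cdot e_n)^2}}{|x|^2}\,dx.
\end{equation*}

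For the right-hand side I plan to write $Rf(\omega,0) = \int_{\mathbb{R}^n}\delta(y\cdot\omega) f(y)\,dy$ in the distributional sense, exchange the $\omega$ and $y$ integrations, and analyze the resulting sphere kernel
\begin{equation*}
K(y) := \int_{S^{n-1}} \delta(y\cdot\omega)\,|\omega\cdot e_n|\,d\omega.
\end{equation*}
For $y\ne 0$, using $\delta(y\cdot\omega) = |y|^{-1}\delta(\hat y\cdot\omega)$ with $\hat y = y/|y|$ collapses $K(y)$ to $|y|^{-1}\int_{\hat y^\perp \cap S^{n-1}}|\omega\cdot e_n|\,d\sigma(\omega)$. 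Decomposing $e_n$ into its components along $\hat y$ and in $\hat y^\perp$ (the latter of length $\sqrt{1-(\hat y\cdot e_n)^2}$) and invoking the rotational symmetry of the equator should give $K(y) = c_n\sqrt{|y|^2-(y\cdot e_n)^2}/|y|^2$ with $c_n := \int_{S^{n-2}}|\omega_1|\,d\omega$.

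Comparing the two resulting Cartesian expressions reduces the theorem to the constant identity $\pi c_n = |S^{n-1}|$. For $n \ge 3$, integrating over the latitude gives $c_n = 2|S^{n-3}|/(n-2)$, and combining this with \eqref{area of nsphere} and $\Gamma(n/2) = \tfrac{n-2}{2}\Gamma((n-2)/2)$ yields $c_n = |S^{n-1}|/\pi$; for $n = 2$ one checks $c_2 = 2 = |S^1|/\pi$ by direct computation on $S^0$. I anticipate the main obstacle to be the rigorous handling of the singular kernel $K(y)$ (justifying the delta-function manipulations via approximate identities or a Schwartz-class pairing), together with the bookkeeping of sphere-integral constants; the reductions and changes of variable are otherwise routine.
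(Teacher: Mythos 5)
Your proposal is correct, but it takes a genuinely different route from the paper's. Both proofs begin the same way, using Lemma \ref{Properties} to reduce to $u=0$, $\beta=e_n$ (this is exactly Proposition \ref{int_rel_vertical} and the surrounding reduction). The divergence is in how the normalized identity is established. The paper treats $n=2$ by a direct angular computation and, for $n\geq 3$, runs a rather elaborate geometric argument: Lemma \ref{Integral Relation_CR} exhausts the exterior of a double cone both by a family of coaxial cones and by the family of tangent hyperplanes, producing a weighted identity in an auxiliary angle $\psi_0$, which is then differentiated and integrated back (Lemma \ref{Derivative of Integral}) to extract the $\sin\psi$ weight; this route leans on the identity \eqref{Asgeirsson}. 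You instead write both sides as $\int_{\mathbb{R}^n} f(x)\,K(x)\,dx$ and verify that the two kernels agree: the left side collapses to the kernel $\sqrt{|x|^2-(x\cdot e_n)^2}\,/\,|x|^2$ via $d\sigma=(\sin\psi)^{n-2}d\omega\,d\psi$, the right side gives the same kernel times $\pi c_n/|S^{n-1}|$ with $c_n=\int_{S^{n-2}}|\omega_1|\,d\omega=2|S^{n-3}|/(n-2)$, and the constant identity $\pi c_n=|S^{n-1}|$ closes the argument (all of these check out, including the $n=2$ case). Your approach is more elementary and handles all $n\geq 2$ uniformly, with the only care point being the justification of the $\delta$-function/Fubini step --- which for $f\in\mathcal{S}(\mathbb{R}^n)$ is routine, e.g.\ by parametrizing $Rf(\omega,0)$ in polar coordinates on $\omega^\perp$ and using the symmetry of the incidence measure on orthonormal $2$-frames rather than literal delta functions. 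What the paper's longer route buys is the finer intermediate identity of Lemma \ref{Integral Relation_CR}, which relates cone data to Radon data on individual tangent hyperplanes (not just the fully averaged quantity) and has independent interest; your kernel computation does not produce that byproduct.
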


As in the case of the Radon transform, invariance properties play a key role in the inversion of the cone transform. In fact, due to rotational invariance, it suffices to prove \eqref{int_rel} only for the vertical cone transform. Moreover, shift invariance enables us to consider vertical cones having vertex at the origin only, that is $u=0$.

\begin{prop}\label{int_rel_vertical}
For any $f \in \mathcal{S}(\mathbb{R}^n)$, we have
\begin{equation} \label{int rel vertical}
\int\limits_0^\pi Cf(0,e_n,\psi)\sin\psi d\psi= \frac{\pi}{|S^{n-1}|}\int\limits_{S^{n-1}} Rf(\omega,0)|\omega \cdot e_n|d\omega.
\end{equation}
\end{prop}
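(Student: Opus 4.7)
The plan is to show that both sides of \eqref{int rel vertical} equal the same integral of $f$ against a radial-times-angular weight over $\mathbb{R}^n$, namely
$$I(f):=\int_{\mathbb{R}^n} f(x)\,\frac{\sqrt{1-(x\cdot e_n/|x|)^2}}{|x|}\,dx.$$
For the left-hand side I would start from the explicit formula \eqref{nD_vertical cone} with $u=0$, multiply by $\sin\psi$, and integrate in $\psi\in(0,\pi)$. The substitution $\sigma=((\sin\psi)\omega,\cos\psi)\in S^{n-1}$ provides the standard spherical decomposition $d\sigma=(\sin\psi)^{n-2}\,d\omega\,d\psi$ with $\sin\psi=\sqrt{1-(\sigma\cdot e_n)^2}$; combining this with $x=\rho\sigma$, $dx=\rho^{n-1}d\rho\,d\sigma$ converts the resulting triple integral precisely into $I(f)$. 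The factor $\sin\psi$ built into the proposition is exactly what is needed to upgrade $(\sin\psi)^{n-2}$ to $(\sin\psi)^{n-1}=(\sin\psi)^{n-2}\cdot\sqrt{1-(\sigma\cdot e_n)^2}$.

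For the right-hand side, I would write $Rf(\omega,0)=\int_{\omega^\perp}f(y)\,dy$ and exchange the order of integration over $\omega$ and $y$. Parametrising $y=r\eta$ with $r>0$ and $\eta\in S^{n-1}\cap\omega^\perp$ (so that $dy=r^{n-2}dr\,d\eta$), and then regrouping to integrate first over $\omega\in S^{n-1}\cap\eta^\perp$, yields
$$\int_{S^{n-1}}|\omega\cdot e_n|\,Rf(\omega,0)\,d\omega=\int_{\mathbb{R}^n}f(z)\,\frac{G(z/|z|)}{|z|}\,dz,\qquad G(\eta):=\int_{S^{n-1}\cap\eta^\perp}|\omega\cdot e_n|\,d\omega.$$
Orthogonally decomposing $e_n=(\eta\cdot e_n)\eta+p$ with $p\in\eta^\perp$ and $|p|=\sqrt{1-(\eta\cdot e_n)^2}$ gives $\omega\cdot e_n=\omega\cdot p$ for every $\omega\in\eta^\perp$. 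By rotational invariance of the great sphere $S^{n-1}\cap\eta^\perp\cong S^{n-2}$, this collapses to
$$G(\eta)=c_{n-2}\sqrt{1-(\eta\cdot e_n)^2},\qquad c_{n-2}:=\int_{S^{n-2}}|\omega\cdot e_1|\,d\omega.$$

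To conclude, I would verify the constant identity $c_{n-2}=|S^{n-1}|/\pi$. A one-line polar integration on $S^{n-2}$ (with $S^0$ interpreted as carrying counting measure when $n=2$) gives $c_{n-2}=2|S^{n-3}|/(n-2)$, and the area formula \eqref{area of nsphere} together with $(n-2)\,\Gamma((n-2)/2)=2\,\Gamma(n/2)$ reduces this to $|S^{n-1}|/\pi$. Inserting this constant into the right-hand side makes it equal to $I(f)$, matching the expression produced by the left-hand side, and the proposition follows. The main technical obstacle is the reduction of $G(\eta)$ to the closed form $c_{n-2}\sqrt{1-(\eta\cdot e_n)^2}$ together with the verification of the numerical value of $c_{n-2}$; everything else is careful bookkeeping of Jacobians and an exchange of integration order.
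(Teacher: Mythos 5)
Your proposal is correct, and it takes a genuinely different route from the paper. The paper proves the $n=2$ case by direct angle manipulations and then, for $n\ge 3$, goes through two auxiliary lemmas: first an ``exhaustion'' identity (Lemma \ref{Integral Relation_CR}) relating $\int_{\psi_0}^{\pi-\psi_0}Cf(0,e_n,\psi)g(\psi_0,\psi)\,d\psi$ to an integral of $Rf(\cdot,0)$ over the latitude circle of directions $((\cos\psi_0)\omega,\sin\psi_0)$ — obtained by slicing $f$ horizontally and applying the Radon identity \eqref{Asgeirsson} to each slice $f_z$ — and then a Leibniz-rule differentiation in $\psi_0$ (Lemma \ref{Derivative of Integral}) followed by integration over $\psi_0\in(0,\pi/2)$. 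Your argument instead identifies both sides of \eqref{int rel vertical} with the single weighted integral $\int_{\mathbb{R}^n}f(x)\,\sqrt{1-(x\cdot e_n/|x|)^2}\,|x|^{-1}\,dx$: the left side by the polar substitution $\sigma=((\sin\psi)\omega,\cos\psi)$, $d\sigma=(\sin\psi)^{n-2}d\omega\,d\psi$ (the same device the paper uses in the proof of Theorem \ref{Inversion Theorem1}, here with the extra factor $\sin\psi$ upgrading $(\sin\psi)^{n-2}$ to $(\sin\psi)^{n-1}$), and the right side by the Funk-type duality swap over the incidence set $\{(\omega,\eta):\omega\perp\eta\}$ together with the reduction $\int_{S^{n-1}\cap\eta^\perp}|\omega\cdot e_n|\,d\omega=c_{n-2}\sqrt{1-(\eta\cdot e_n)^2}$; your constant $c_{n-2}=2|S^{n-3}|/(n-2)=|S^{n-1}|/\pi$ checks out against \eqref{area of nsphere}. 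Your route is shorter, uniform in dimension (with the $S^0$ counting-measure convention covering $n=2$), and makes transparent why the weight $\sin\psi$ is the natural one; the one step you should justify explicitly is the interchange of the order of integration over $\omega$ and $\eta$ on the incidence manifold, which follows from uniqueness of the $O(n)$-invariant measure on that homogeneous space (or a direct computation in coordinates). What the paper's longer route buys is the intermediate identity \eqref{Int_Rel_Lemma}, an ``un-integrated'' relation between the cone transform and the Radon transform at each fixed $\psi_0$, which has independent interest beyond the proposition itself.
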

For the proof, see Section~\ref{subsec:proof of prop}.\\

\emph{Proof of Theorem \ref{Integral Relation}}. We will use Proposition \ref{int_rel_vertical} and the properties of the cone transform to deduce Theorem \ref{Integral Relation}. We first remind that the Radon transform commutes with shifts and rotations, that is $R(T_uf)(\omega,s)=Rf(\omega,s+\omega\cdot u)$ and $M_ARf(\omega,s)=Rf(A\omega,s)=R(M_Af)(\omega,s)$.

As cone transform also commutes with shifts, Proposition \ref{int_rel_vertical} implies that
 \begin{align*}
  \int\limits_0^\pi &Cf(u,e_n,\psi)\sin\psi d\psi
  = \int\limits_0^\pi C(T_uf)(0,e_n,\psi)\sin\psi d\psi\\
  &= \frac{\pi}{|S^{n-1}|} \int\limits_{S^{n-1}}R(T_uf)(\omega,0)|\omega \cdot e_n|d\omega= \frac{\pi}{|S^{n-1}|}\int\limits_{S^{n-1}} Rf(\omega,\omega\cdot u)|\omega\cdot e_n|d\omega.
 \end{align*}

Next, for $\beta \in S^{n-1}$, let $A$ be the rotation matrix such that $\beta=Ae_n$ and $x=A^{-1}u$. As cone transform commutes with rotations, we further have
\begin{align*}
\int\limits_0^\pi Cf(u,\beta,\psi)\sin\psi d\psi&=  \int\limits_0^\pi C(M_Af)(x,e_n,\psi) \sin\psi d\psi \\
&= \frac{\pi}{|S^{n-1}|}\int\limits_{S^{n-1}} R(M_Af)(\omega,\omega\cdot x)|\omega\cdot e_n|d\omega.
\end{align*}
Due to the rotational invariance of the Radon transform, we have
\begin{align*}
\int\limits_{S^{n-1}} &R(M_Af)(\omega,\omega\cdot x)|\omega\cdot e_n|d\omega
  =  \int\limits_{S^{n-1}} M_ARf(\omega,\omega\cdot x)|\omega\cdot e_n|d\omega\\
  &=  \int\limits_{S^{n-1}} Rf(A\omega,\omega\cdot x)|\omega\cdot e_n|d\omega
  =  \int\limits_{S^{n-1}} Rf(A\omega,\omega\cdot A^{-1}u)|\omega\cdot A^{-1}\beta|d\omega\\
 & =  \int\limits_{S^{n-1}} Rf(A\omega,A\omega\cdot u)|A\omega\cdot \beta|d\omega
  =  \int\limits_{S^{n-1}} Rf(\omega,\omega\cdot u)|\omega\cdot \beta|d\omega,
\end{align*}
The last equality is due to the rotational invariance of the Lebesgue measure on the sphere. Hence, we obtain \eqref{int_rel}. \qed

\begin{remark}
 As it will be mentioned in Section~\ref{sec:Remarks}, the assumption $f \in \mathcal{S}(\mathbb{R}^n)$ can be significantly weakened. The same applies to Theorem \ref{Inversion Thm}.
\end{remark}

The equality \eqref{int_rel} enables us to invert the cone transform by utilizing the inversion formulas for the Radon transform.
\begin{theorem}\label{Inversion Thm}
Let $f \in \mathcal{S}(\mathbb{R}^n)$. For any $u \in \mathbb{R}^n$, we have
\begin{equation}\label{Inversion}
f(u)=\frac{\Gamma^2(\frac{n+1}{2})}{2\pi^n\Gamma(n)}\int\limits_{S^{n-1}} \int\limits_0^\pi I^{1-n} Cf(u,\beta,\psi)\sin\psi d\psi d\beta.
\end{equation}
\end{theorem}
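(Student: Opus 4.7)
The plan is to combine Theorem~\ref{Integral Relation} with the Radon inversion formula \eqref{inverse_radon}, eliminating the $|\omega\cdot\beta|$ factor by an extra averaging over $\beta$. Starting from
$$\int_0^\pi Cf(u,\beta,\psi)\sin\psi\, d\psi \;=\; \frac{\pi}{|S^{n-1}|}\int_{S^{n-1}} Rf(\omega,\omega\cdot u)|\omega\cdot\beta|\,d\omega,$$
the first step is to integrate in $\beta$ over $S^{n-1}$ and apply Fubini. By the rotational invariance of surface measure on the sphere, the inner integral $\int_{S^{n-1}}|\omega\cdot\beta|\,d\beta$ is independent of $\omega$. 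Placing $\omega$ at the north pole and using spherical coordinates with polar angle $\theta$, a standard beta-function calculation gives
$$\int_{S^{n-1}}|\omega\cdot\beta|\,d\beta = |S^{n-2}|\int_0^\pi|\cos\theta|\sin^{n-2}\theta\,d\theta = \frac{2|S^{n-2}|}{n-1}.$$

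Substituting this back and recognising the back-projection $R^{\#}$, I obtain
$$\int_{S^{n-1}}\int_0^\pi Cf(u,\beta,\psi)\sin\psi\, d\psi\, d\beta \;=\; \frac{2\pi|S^{n-2}|}{(n-1)|S^{n-1}|}\,R^{\#}Rf(u).$$

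Next, I apply the Riesz potential $I^{1-n}$ in the variable $u$ to both sides. Since $I^{1-n}$ is a Fourier multiplier in $u$ only, it commutes with the $\beta,\psi$ integrations on the left. On the right, I invoke the Radon inversion formula \eqref{inverse_radon} with $\alpha=n-1$, which gives $I^{1-n}R^{\#}Rf = 2(2\pi)^{n-1}f$. This produces
$$\int_{S^{n-1}}\int_0^\pi I^{1-n}Cf(u,\beta,\psi)\sin\psi\, d\psi\, d\beta \;=\; \frac{4\pi(2\pi)^{n-1}|S^{n-2}|}{(n-1)|S^{n-1}|}\,f(u),$$
so $f(u)$ equals the cone integral on the left divided by the stated constant.

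The only remaining task is to verify that the reciprocal of this constant simplifies to $\Gamma^2(\tfrac{n+1}{2})/(2\pi^n\Gamma(n))$. Substituting $|S^{n-1}|=2\pi^{n/2}/\Gamma(n/2)$ and $|S^{n-2}|=2\pi^{(n-1)/2}/\Gamma((n-1)/2)$, then using the identity $(n-1)\Gamma(\tfrac{n-1}{2})=2\Gamma(\tfrac{n+1}{2})$ together with the duplication formula \eqref{duplication fmla} in the form $\Gamma(\tfrac{n}{2})\Gamma(\tfrac{n+1}{2}) = 2^{1-n}\sqrt{\pi}\,\Gamma(n)$, reduces the prefactor to exactly the one in \eqref{Inversion}. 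This Gamma-function bookkeeping is the only real obstacle; the rest of the argument is essentially a single extra spherical average applied to Theorem~\ref{Integral Relation}.
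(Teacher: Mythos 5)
Your proposal is correct and follows essentially the same route as the paper: integrate \eqref{int_rel} over $\beta$, evaluate $\int_{S^{n-1}}|\omega\cdot\beta|\,d\beta=\tfrac{2|S^{n-2}|}{n-1}$ by rotational invariance, recognise $R^{\#}Rf$, and apply \eqref{inverse_radon} with $\alpha=n-1$. The Gamma-function simplification of the constant also matches the paper's computation, so there is nothing to add.
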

\begin{proof} Integrating both sides of \eqref{int_rel} with respect to $\beta$ over $S^{n-1}$, we obtain
 \begin{align*}
\int\limits_{S^{n-1}} \int\limits_0^\pi Cf(u,\beta,\psi)\sin\psi d\psi d\beta= \frac{\pi}{|S^{n-1}|}\int\limits_{S^{n-1}}
Rf(\omega,\omega\cdot u)\int\limits_{S^{n-1}}|\omega \cdot \beta|d\beta d\omega.
 \end{align*}
Using the rotation invariance of the Lebesgue measure on the sphere, for any $\omega \in S^{n-1}$,  we compute
 \begin{align*}
 \int\limits_{S^{n-1}}|\omega \cdot \beta|d\beta&=\int\limits_{S^{n-2}}\int\limits_0^\pi |\cos \phi|(\sin \phi)^{n-2}d\phi d\theta =\frac{2|S^{n-2}|}{n-1}.
 \end{align*}
 Thus, we get
  \begin{align}\label{BPR_rel}
  \int\limits_{S^{n-1}} \int\limits_0^\pi &Cf(u,\beta,\psi)\sin\psi d\psi d\beta \nonumber
  =\frac{\pi}{|S^{n-1}|} \frac{2|S^{n-2}|}{n-1} \int\limits_{S^{n-1}}Rf(\omega,\omega\cdot u)d\omega \\
  &=\frac{2\pi}{n-1} \frac{|S^{n-2}|}{|S^{n-1}|} \int\limits_{S^{n-1}} R^{\#} Rf(u)
 =\frac{\pi\Gamma(n)}{2^{n-1}\Gamma^2(\frac{n+1}{2})}R^{\#} Rf(u).
 \end{align}
Note that, for the evaluation of the constant, we have used the area formula for the $n$-sphere, \eqref{area of nsphere} and the duplication formula \eqref{duplication fmla}.
 Now, using formula \eqref{inverse_radon} with $\alpha=n-1$, we obtain the result.
 \end{proof}
 \begin{corollary}For $n=3$, the formula \eqref{Inversion} reads as
$$f(u)=\frac{-1}{4\pi^3}\int\limits_{S^{n-1}} \int\limits_0^\pi \Delta Cf(u,\beta,\psi)\sin\psi d\psi d\beta, $$
where $\Delta$ acts on the variable $u$.
\end{corollary}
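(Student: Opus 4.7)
The plan is straightforward: this corollary is a direct specialization of Theorem \ref{Inversion Thm} at $n=3$, so I would simply substitute $n=3$ into the inversion formula \eqref{Inversion}, evaluate the constant prefactor, and identify the Riesz potential $I^{1-n}$ with a familiar differential operator.

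First, I would evaluate the constant $\frac{\Gamma^2((n+1)/2)}{2\pi^n\Gamma(n)}$ at $n=3$. Using $\Gamma(2)=1$ and $\Gamma(3)=2$, the prefactor collapses to $\frac{1}{4\pi^3}$. Second, I would invoke the identity $I^{1-n}=(-\Delta)^{(n-1)/2}$ valid for odd $n$, which the paper recalls just after \eqref{inverse_radon}. At $n=3$ this gives $I^{-2}=-\Delta$, with the Laplacian acting in the vertex variable $u$ --- the same variable in which the Riesz potential in \eqref{Inversion} is originally defined, since the right-hand side of \eqref{Inversion} reconstructs $f(u)$. Combining the prefactor $\frac{1}{4\pi^3}$ with the extra minus sign from $I^{-2}=-\Delta$ produces exactly the claimed expression.

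The only point that deserves a brief justification --- and the closest thing to an obstacle --- is that $\Delta$ may be pulled outside the $\beta$ and $\psi$ integrals so that it acts on $Cf(u,\beta,\psi)$ pointwise. Since $f\in\mathcal S(\mathbb R^n)$, the cone transform $Cf(u,\beta,\psi)$ is smooth in $u$ with sufficient control on its $u$-derivatives uniformly in the bounded parameter set $S^{n-1}\times(0,\pi)$, so differentiation under the integral sign is routine by dominated convergence. Beyond this routine verification, the argument is a one-line specialization with no additional content.
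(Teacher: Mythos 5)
Your proposal is correct and is exactly the intended derivation: substitute $n=3$ into \eqref{Inversion}, compute $\Gamma^2(2)/(2\pi^3\Gamma(3))=1/(4\pi^3)$, and replace $I^{1-3}=(-\Delta)^{1}=-\Delta$ acting in $u$, which yields the stated minus sign. The interchange of $\Delta$ with the $\beta,\psi$ integrals that you flag is not even needed, since in \eqref{Inversion} the Riesz potential already sits inside the integrals applied to $Cf(u,\beta,\psi)$ pointwise in $(\beta,\psi)$.
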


\section{Relation of the Cone Transform with Cosine Transform. Other Inversion Formulas}
The main goal of this section is to derive a formula which is applicable in Compton imaging. This is achieved in Theorem 14 and Remark 15. We start, however, with a relation between the cone transform and the cosine transform which is defined as follows:
\begin{definition}
The cosine transform of a function $f \in C(S^{n-1})$ is defined by
\begin{equation}
 \textswab{C}f(\omega)=\frac{1}{|S^{n-1}|}\int\limits_{S^{n-1}}f(\sigma)|\sigma \cdot \omega|d\sigma,
\end{equation}
for all $\omega \in S^{n-1}$.
\end{definition}

Now the relation \eqref{int_rel} can be written as
\begin{align} \label{cos of radon}
\textswab{C}(R(T_uf))(\beta)
= \frac{1}{|S^{n-1}|}\int\limits_{S^{n-1}} R(T_uf)(\omega,0)|\omega\cdot \beta |d\omega
= \frac{1}{\pi}\int\limits_0^\pi Cf(u,\beta,\psi)\sin\psi d\psi.
\end{align}

The cosine transform is a continuous bijection of $C_{even}^\infty (S^{n-1})$ to itself (see e.g. \cite{Gardner}, \cite{Rubin}).  Since, for any $f \in \mathcal{S}(\mathbb{R}^n)$, $Rf(\omega, 0)$ is an even function in $C^\infty(S^{n-1})$, we can recover the function $R(T_uf)$ by inverting the cosine transform. Before stating this inversion formula, we recall the definitions of the Beltrami-Laplace operator and the Funk transform.

\begin{definition}Let $f \in C^2(S^{n-1})$. The Beltrami-Laplace operator $\Delta_S$ on $S^{n-1}$ is defined by
\begin{equation}\label{Beltrami-Laplace}
 (\Delta_Sf)(\frac{x}{|x|})= |x|^2(\Delta \tilde{f})(x),
\end{equation}
where $\tilde{f}(x)=f(\frac{x}{|x|})$ is the homogeneous extension of $f$ to $\mathbb{R}^n$, and $\Delta$ is the Laplace operator on $\mathbb{R}^n$.
\end{definition}

\begin{definition}
Funk transform of a function $f \in C(S^{n-1})$ is defined by
\begin{equation}\label{def_Funk}
 Ff(\theta)=\int\limits_{S^{n-1}\cap \theta^\bot} f(\sigma)d_\theta \sigma=\int\limits_{\{\sigma \in S^{n-1}: d(\sigma,\theta)=\pi/2\}} f(\sigma)d_\theta \sigma.
\end{equation}
Here, $d(\sigma,\theta)=arccos(\sigma \cdot \theta)$ is the geodesic distance between the points $\sigma$ and $\theta$ in $S^{n-1}$, and $d_\theta \sigma$ stands for the $O(n)$-invariant probability measure on the $(n-2)$-dimensional sphere $S^{n-1}\cap \theta^\bot$.
\end{definition}

\begin{theorem}\cite{Rubin} \label{cosine inversion}
 Let $g=\textswab{C}f$, $f \in C_{even}^\infty (S^{n-1})$. Then, if $n$ is odd,
 \begin{equation}\label{Inversion_cosine_odd dim}
  f(\omega)=P_r(\Delta_S)\left\{\frac{-2\pi^{(2-n)/2}}{\Gamma(\frac{n}{2})}\int\limits_{S^{n-1}}g(\sigma)\log{\frac{1}{|\omega\cdot \sigma|}}d\sigma \right\}
  +\frac{\Gamma(\frac{n+1}{2})}{\pi^{(n-1)/2}}\int\limits_{S^{n-1}}g(\sigma)d\sigma,
 \end{equation}
 with $r=(n+1)/2$, and if $n$ is even,
\begin{equation}\label{Inversion_cosine_even dim}
 f=cP_r(\Delta_S)Fg, \quad \quad c=-\frac{\pi 2^{n-1}}{\Gamma(n-1)},
\end{equation}
with $r=n/2$, where $F$ is the Funk transform and
$$P_r(\Delta_S)=4^{-r}\prod_{k=0}^{r-1}\left[-\Delta_S+(2k-1)(n-1-2k)\right],$$
with $\Delta_S$ being the Beltrami-Laplace operator on $S^{n-1}$.
\end{theorem}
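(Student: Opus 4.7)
The theorem is a diagonalization statement, so the natural approach is to work in spherical harmonics. Since $f\in C^\infty_{\mathrm{even}}(S^{n-1})$, only harmonics of even degree contribute; write $f=\sum_{m\ge0}f_{2m}$. Each of the three zonal operators appearing in the inversion formulas — the cosine transform $\textswab{C}$ with kernel $|\sigma\cdot\omega|$, the logarithmic operator $\mathcal{L}g(\omega)=\int_{S^{n-1}}g(\sigma)\log(1/|\sigma\cdot\omega|)\,d\sigma$, and the Funk transform $F$ — acts as a scalar multiplier on each harmonic subspace by the Funk–Hecke theorem. The claim then reduces to checking that one explicit sequence of multipliers is the reciprocal of another.

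\textbf{Computing the multipliers.} I would apply Funk–Hecke to obtain
\[
\textswab{C}Y_{2m}=\lambda_{2m}Y_{2m},\qquad \lambda_{2m}=\frac{|S^{n-2}|}{|S^{n-1}|\,C_{2m}^{(n-2)/2}(1)}\int_{-1}^{1}|t|\,C_{2m}^{(n-2)/2}(t)(1-t^2)^{(n-3)/2}\,dt,
\]
evaluating the integral via the Rodrigues formula for Gegenbauer polynomials combined with the Beta-function identity $\int_0^1 t^a(1-t^2)^b\,dt=\tfrac12 B(\tfrac{a+1}{2},b+1)$; this expresses $\lambda_{2m}$ as an explicit ratio of $\Gamma$-factors. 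The multipliers of $\mathcal{L}$ and $F$ are obtained by identical computations with $\log(1/|t|)$ and (in the Funk case) an angular delta-type kernel. On the differential side, $-\Delta_S Y_{2m}=2m(2m+n-2)Y_{2m}$, and a short algebraic check gives the key identity
\[
2m(2m+n-2)+(2k-1)(n-1-2k)=(2m+2k-1)(2m+n-1-2k),
\]
so the multiplier of $P_r(\Delta_S)$ is $4^{-r}\prod_{k=0}^{r-1}(2m+2k-1)(2m+n-1-2k)$, a telescoping product that collapses to a run of consecutive shifted integers of total length $2r$, i.e.\ a simple Pochhammer-type expression.

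\textbf{Matching and obstacle.} The endgame is to verify, for every $m\ge 1$, that the $P_r(\Delta_S)$-multiplier times the $\mathcal{L}$-multiplier (odd $n$) or the $F$-multiplier (even $n$), scaled by the stated constant $c$, is exactly $1/\lambda_{2m}$. The main obstacle is essentially bookkeeping: several $\Gamma$-ratios must be reconciled by repeated use of $\Gamma(z+1)=z\Gamma(z)$ and of the duplication formula \eqref{duplication fmla} to turn ratios of half-integer Gamma values into products of integers. The genuine subtlety lies at $m=0$: in odd dimensions the logarithmic multiplier does not correctly invert $\lambda_0$, which is precisely why \eqref{Inversion_cosine_odd dim} carries the additive constant-mode correction $\tfrac{\Gamma((n+1)/2)}{\pi^{(n-1)/2}}\int_{S^{n-1}}g(\sigma)\,d\sigma$; in even dimensions, the Gegenbauer integral produces half-integer $\Gamma$-factors that no polynomial in $m(m+n-2)$ alone can reproduce, and inserting the Funk transform $F$, whose multiplier supplies precisely those missing half-integer factors, is exactly what makes \eqref{Inversion_cosine_even dim} hold.
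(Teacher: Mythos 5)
First, a point of reference: the paper does not prove this statement at all --- it is quoted verbatim from Rubin's book \cite{Rubin} and used as a black box, so there is no in-paper argument to compare yours against. Judged on its own, your plan is the classical route and the right one: diagonalize $\textswab{C}$, the logarithmic operator, the Funk transform, and $P_r(\Delta_S)$ simultaneously over even spherical harmonics via Funk--Hecke and match multipliers. Your key algebraic identity $2m(2m+n-2)+(2k-1)(n-1-2k)=(2m+2k-1)(2m+n-1-2k)$ checks out, and your diagnosis of the $m=0$ anomaly in odd dimensions is exactly correct (for $m=0$ the factor $k=(n-1)/2$ in $P_{(n+1)/2}(\Delta_S)$ vanishes, so the operator kills constants and the additive term is forced).

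There are, however, genuine gaps. The most serious is your claim that the multiplier of $\mathcal{L}g(\omega)=\int g(\sigma)\log(1/|\omega\cdot\sigma|)\,d\sigma$ is obtained by ``identical computations'' to the $|t|$ kernel: the Rodrigues-formula/Beta-function device does not apply to $\log(1/|t|)$, whose derivative is singular at $t=0$. One has to realize $\log(1/|t|)$ as $-\tfrac{d}{d\lambda}|t|^{\lambda}\big|_{\lambda=0}$ and differentiate the known multiplier of the analytic family of $\lambda$-cosine transforms (this is precisely how Rubin's treatment proceeds, and it is the technical heart of the odd-dimensional formula); as written, your proposal skips it. Second, the ``bookkeeping'' you defer is where the stated constants live: in particular the paper's Funk transform is normalized by the $O(n)$-invariant \emph{probability} measure on $S^{n-1}\cap\theta^{\perp}$, not the surface measure, which rescales its Funk--Hecke multiplier and hence the constant $c=-\pi 2^{n-1}/\Gamma(n-1)$; without carrying out the match you cannot confirm any of the three constants in the statement. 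Finally, a small inaccuracy: the collapse of $\prod_{k=0}^{r-1}(2m+2k-1)(2m+n-1-2k)$ into $2r$ \emph{consecutive} integers $2m-1,2m,\dots,2m+n-1$ is correct only for odd $n$; for even $n$ the two index runs overlap and you get a product of odd factors with the interior ones squared, which is exactly what the half-integer Gamma ratios from the Funk multiplier must cancel. The architecture of your argument is sound, but the odd-dimensional logarithmic multiplier and the constant verification are missing steps, not routine ones.
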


Thus, we can find $RT_uf$ explicitly for all $u \in \mathbb{R}^n$:
\begin{theorem} \label{Radon in terms of cone}
Let $f \in \mathcal{S}(\mathbb{R}^n)$. For any $u \in \mathbb{R}^n$ and $\omega \in S^{n-1}$,
\begin{enumerate}
 \item if $n$ is odd,
  \begin{align}\label{Radon by cone_odd}
 &Rf(\omega, \omega \cdot u)\nonumber \\
 &=\frac{-2\pi^{-n/2}}{\Gamma(\frac{n}{2})}P_{(n+1)/2}(\Delta_S)\left\{\int\limits_{S^{n-1}} \int\limits_0^\pi Cf(u,\beta,\psi)\log{\frac{1}{|\omega\cdot \beta|}}\sin\psi d\psi d\beta \right\}\nonumber \\
  &+\frac{\Gamma(\frac{n+1}{2})}{\pi^{(n+1)/2}}\int\limits_{S^{n-1}}\int\limits_0^\pi Cf(u,\beta,\psi)\sin\psi d\psi d\beta,
  \end{align}
 \item if $n$ is even,
  \begin{equation}\label{Radon by cone_even}
  Rf(\omega, \omega \cdot u)=\frac{-2^{n-1}}{\Gamma(n-1)}\int\limits_0^\pi P_{n/2}(\Delta_S)F(Cf)(u,\omega,\psi)\sin\psi d\psi,
  \end{equation}
\end{enumerate}
where $F$ and $P_r(\Delta_S)$ are given as in Theorem \ref{cosine inversion}, and both of them act on the variable $\omega$.
\end{theorem}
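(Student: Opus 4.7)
I would read the theorem as a direct pull-back of the cosine-transform inversion (Theorem~\ref{cosine inversion}) through the identity~\eqref{cos of radon}. Fix $u \in \mathbb{R}^n$ and set
$$h_u(\omega) := Rf(\omega, \omega \cdot u) = R(T_uf)(\omega, 0), \qquad \omega \in S^{n-1}.$$
Since $Rf(-\omega,-s)=Rf(\omega,s)$, one has $h_u(-\omega) = Rf(-\omega,-\omega\cdot u) = h_u(\omega)$, so $h_u \in C_{even}^\infty(S^{n-1})$ (smoothness following from $f \in \mathcal{S}(\mathbb{R}^n)$). Equation \eqref{cos of radon} then reads
$$\textswab{C}h_u(\beta) \;=\; \frac{1}{\pi}\int\limits_0^\pi Cf(u,\beta,\psi)\sin\psi\, d\psi \;=:\; g_u(\beta),$$
so recovering $Rf(\omega,\omega\cdot u)$ from the cone data is exactly the problem of inverting the cosine transform on $g_u$.

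For the odd-dimensional case I would apply \eqref{Inversion_cosine_odd dim} with $r=(n+1)/2$, $f \leftarrow h_u$, $g \leftarrow g_u$, and substitute the definition of $g_u$ into each of the two $\sigma$-integrals. Fubini converts both into double integrals in $(\beta,\psi)$. The logarithmic term carries a prefactor $-2\pi^{(2-n)/2}/\Gamma(n/2)$, and the extra $1/\pi$ coming from $g_u$ collapses it to $-2\pi^{-n/2}/\Gamma(n/2)$, which is precisely the first constant in \eqref{Radon by cone_odd}; the constant term likewise picks up a $1/\pi$, yielding $\Gamma((n+1)/2)/\pi^{(n-1)/2}\cdot 1/\pi = \Gamma((n+1)/2)/\pi^{(n+1)/2}$, which is the second constant in \eqref{Radon by cone_odd}. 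For the even-dimensional case I would apply \eqref{Inversion_cosine_even dim}, which gives $h_u = c\, P_{n/2}(\Delta_S)\,Fg_u$ with $c = -\pi 2^{n-1}/\Gamma(n-1)$. Both $F$ and $P_{n/2}(\Delta_S)$ act only in the $\omega$-variable, whereas the $\psi$-integral is against the smooth, compactly supported weight $\sin\psi$; hence both operators pull inside the $\psi$-integral. The $1/\pi$ from $g_u$ cancels the $\pi$ inside $c$, leaving $-2^{n-1}/\Gamma(n-1)$ as in \eqref{Radon by cone_even}.

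\textbf{Main obstacle.} The argument is essentially substitution and constant bookkeeping, so the only genuine worry is justifying the interchanges: Fubini for the logarithmic kernel in the odd case, and commutation of the linear operators $F$ and $P_r(\Delta_S)$ with the $\psi$-integral in both cases. All of these are routine once one checks that $(\beta,\psi)\mapsto Cf(u,\beta,\psi)$ is jointly smooth in $\beta$ with derivatives uniformly controlled as $\psi$ ranges over the compact interval $[0,\pi]$; this follows from $f \in \mathcal{S}(\mathbb{R}^n)$ and the explicit formula \eqref{nD_vertical cone} after rotating $\beta$ to $e_n$, which also gives the smoothness of $h_u$ needed to invoke Theorem~\ref{cosine inversion} in the first place.
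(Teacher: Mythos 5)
Your proposal is correct and follows essentially the same route as the paper, which simply applies the cosine-transform inversion formulas \eqref{Inversion_cosine_odd dim} and \eqref{Inversion_cosine_even dim} to the identity \eqref{cos of radon}; your constant bookkeeping (the extra factor $1/\pi$ absorbed into each prefactor) checks out. The only difference is that you spell out the evenness/smoothness of $\omega \mapsto Rf(\omega,\omega\cdot u)$ and the interchange justifications, which the paper leaves implicit.
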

\begin{proof}
 The result follows by applying inverse cosine transform \eqref{Inversion_cosine_odd dim} and \eqref{Inversion_cosine_even dim} to equality \eqref{cos of radon}.
\end{proof}

\begin{remark}\label{R:comptonappl}\indent
\begin{enumerate}
\item
For any $\omega \in S^{n-1}$ and $s \in \mathbb{R}$, the Radon transform $Rf(\omega, s)$ of a function $f \in \mathcal{S}(\mathbb{R}^n)$ can be computed using formulas \eqref{Radon by cone_odd} and \eqref{Radon by cone_even}, if for any $(\omega, s)$ one has access to a cone vertex (= detector location) $u \in \mathbb{R}^n$ such that $u \cdot \omega=s$. For instance a line (curve) array of detectors should be sufficient. Thus, Theorem \ref{Radon in terms of cone} together with formula \eqref{inverse_radon} should provide inversion formulas for the cone transform that are applicable to Compton camera data. This idea  leads to a variety of new inversion formulas for Compton camera imaging, which will be derived and applied elsewhere.
\item We applied this approach to some 2D examples. Figures \ref{fig:reconstruction1} and \ref{fig:reconstruction2} show the reconstructions of some phantoms from their projections collected by four Compton cameras placed along the sides of a square. We simulate analytically the Compton projection data of the phantoms and then use formula \eqref{Radon by cone_even} to convert them to Radon projections. Finally, the filtered back-projection is applied to invert the Radon transform and obtain the reconstructions.
\end{enumerate}
\end{remark}

\begin{center}
\begin{figure}[H]
                \includegraphics[width=\textwidth]{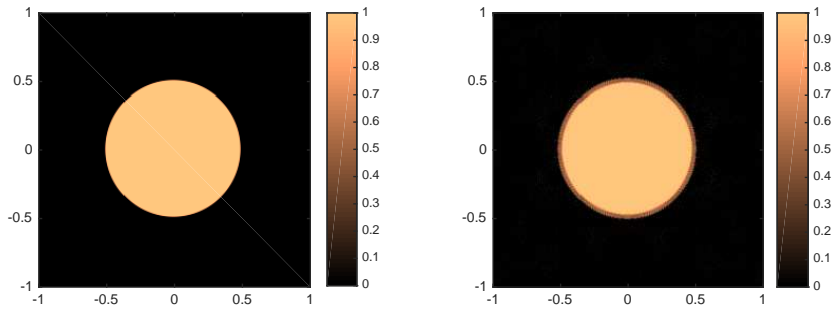}
                \caption{ Left: The phantom is the characteristic function of a circle having density 1 unit, radius 0.5 unit and centered at $(0, 0)$. Right: 256x256 image reconstructed from the simulated Compton data using 257 detectors per side and 200 counts for the angles $\beta$ and $\psi$ each (see Fig. \ref{fig:2Dcone}).}\label{fig:reconstruction1}

\end{figure}

\begin{figure}[H]
                \includegraphics[width=\textwidth]{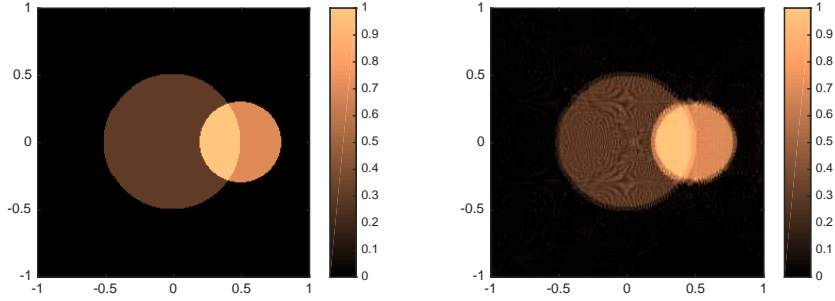}
               \caption{Left: The phantom is the sum of the characteristic functions of two intersecting circles having densities 0.3 and 0.7 units, radii 0.5 and 0.3 units, and centered at $(0,0)$ and $(0.5,0)$. Right: 256x256 image reconstructed from the simulated Compton data using 257 detectors per side and 200 counts for the angles $\beta$ and $\psi$ each (see Fig. \ref{fig:2Dcone}).}\label{fig:reconstruction2}
\end{figure}
\end{center}
\section{Relation of the Cone Transform with Spherical Harmonics}

Utilizing the relation of the cosine transform with spherical harmonics, we can relate the coefficients of the spherical harmonics expansion of the cone and Radon transforms.
\begin{lemma}
Let $g \in L^1(S^{n-1})$. Then,
\begin{equation}
\label{integration wrt beta}
 \int\limits_{S^{n-1}} \int\limits_0^\pi Cf(u,\beta,\psi)g(\beta) \sin\psi d\psi d\beta = \pi \int\limits_{S^{n-1}} Rf(\omega,\omega \cdot u)\textswab{C}g(\omega)d\omega.
\end{equation}
\end{lemma}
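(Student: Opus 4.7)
The plan is to simply combine Theorem \ref{Integral Relation} with the definition of the cosine transform; the only real work is justifying the interchange of integrals.

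First I would start with the pointwise-in-$\beta$ identity
\[
\int_0^\pi Cf(u,\beta,\psi)\sin\psi\,d\psi=\frac{\pi}{|S^{n-1}|}\int_{S^{n-1}}Rf(\omega,\omega\cdot u)|\omega\cdot\beta|\,d\omega,
\]
which is exactly \eqref{int_rel}. Multiplying both sides by $g(\beta)$ and integrating over $\beta\in S^{n-1}$ gives
\[
\int_{S^{n-1}}\int_0^\pi Cf(u,\beta,\psi)g(\beta)\sin\psi\,d\psi\,d\beta=\frac{\pi}{|S^{n-1}|}\int_{S^{n-1}}\!\!\int_{S^{n-1}}Rf(\omega,\omega\cdot u)|\omega\cdot\beta|\,g(\beta)\,d\omega\,d\beta.
\]

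Next I would swap the order of the two $S^{n-1}$ integrations on the right-hand side. This is legitimate by Fubini's theorem: since $f\in\mathcal{S}(\mathbb{R}^n)$, $Rf(\omega,\omega\cdot u)$ is bounded in $\omega$, the factor $|\omega\cdot\beta|$ is bounded by $1$, and $g\in L^1(S^{n-1})$ by assumption, so the double integral is absolutely convergent. After swapping I would pull $Rf(\omega,\omega\cdot u)$ out of the inner integral and recognize
\[
\frac{1}{|S^{n-1}|}\int_{S^{n-1}}|\omega\cdot\beta|\,g(\beta)\,d\beta=\textswab{C}g(\omega)
\]
directly from the definition of the cosine transform. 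This yields
\[
\int_{S^{n-1}}\int_0^\pi Cf(u,\beta,\psi)g(\beta)\sin\psi\,d\psi\,d\beta=\pi\int_{S^{n-1}}Rf(\omega,\omega\cdot u)\textswab{C}g(\omega)\,d\omega,
\]
which is \eqref{integration wrt beta}.

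There is no real obstacle here; the statement is essentially a reformulation of Theorem \ref{Integral Relation} after pairing with an arbitrary test function $g$. The only step requiring a moment of care is the application of Fubini, and that follows immediately from the integrability bounds noted above together with the Schwartz decay of $f$.
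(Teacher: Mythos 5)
Your proposal is correct and follows exactly the paper's own argument: multiply \eqref{int_rel} by $g(\beta)$, integrate over $\beta$, exchange the order of the two spherical integrals, and recognize the inner integral as $\textswab{C}g(\omega)$. The only difference is that you spell out the Fubini justification, which the paper leaves implicit.
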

\begin{proof}
Multiplying both sides of \eqref{int_rel} with $g(\beta)$ and integrating with respect to $\beta$ over $S^{n-1}$, we have
\begin{align*}
 \int\limits_{S^{n-1}} \int\limits_0^\pi Cf(u,\beta,\psi)g(\beta) \sin\psi d\psi d\beta
 &=\frac{\pi}{|S^{n-1}|} \int\limits_{S^{n-1}} Rf(\omega,\omega \cdot u)\int\limits_{S^{n-1}}g(\beta)|\omega \cdot \beta| d\beta d\omega\\
 &=\pi \int\limits_{S^{n-1}} Rf(\omega,\omega \cdot u)\textswab{C}g(\omega)d\omega.
\end{align*}
\end{proof}
The spherical harmonics are known to be the eigenfunctions of the cosine transform. This follows from the Funk-Hecke Formula:
\begin{theorem}[\cite{Muller}]({Funk-Hecke Formula})
 \label{Funk-Hecke}
  Suppose $f(t)$ is continuous for $t \in [-1,1]$. Then, for every spherical harmonic $Y_m$ of degree $m$ and $\omega \in S^{n-1}$,
\begin{equation}
 \int\limits_{S^{n-1}}f(\omega \cdot \sigma)Y_m(\sigma)d\sigma=\lambda_mY_m(\omega),
\end{equation}
with
\begin{equation*}
 \lambda_m=|S^{n-2}|\int\limits_{-1}^1f(t)P_m(t)(1-t^2)^{(n-3)/2}dt,
\end{equation*}
where $P_m(t)$ are the Legendre polynomials, see \cite{Muller}.
\end{theorem}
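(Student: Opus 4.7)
The plan is to exploit the rotation invariance of the integral operator defined by the left-hand side and then apply a Schur-type argument, which reduces the claim to the computation of a single scalar that I can evaluate explicitly in spherical coordinates.

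First I would fix $f \in C([-1,1])$ and consider the operator $T_f$ on $C(S^{n-1})$ defined by $T_f g(\omega) = \int_{S^{n-1}} f(\omega \cdot \sigma) g(\sigma)\, d\sigma$. Using the $O(n)$-invariance of the spherical surface measure, the change of variable $\sigma \mapsto A^{-1}\sigma$ shows that $T_f(g \circ A) = (T_f g) \circ A$ for every $A \in O(n)$. Since each space $\mathcal{H}_m$ of spherical harmonics of degree $m$ is an $O(n)$-invariant subspace of $C(S^{n-1})$ and $T_f$ maps $\mathcal{H}_m$ into itself (as $T_f$ commutes with the Beltrami--Laplace operator, whose $-m(m+n-2)$-eigenspace is $\mathcal{H}_m$), the irreducibility of each $\mathcal{H}_m$ under $O(n)$ together with Schur's lemma forces $T_f$ to act on $\mathcal{H}_m$ by a single scalar $\lambda_m$; that is, $T_f Y_m = \lambda_m Y_m$ for every $Y_m \in \mathcal{H}_m$, which is precisely the pointwise identity claimed in the theorem.

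Next I would compute $\lambda_m$ by evaluating $T_f Y_m$ at the pole $\omega = e_n$. Writing $\sigma = ((\sin\theta)\sigma', \cos\theta)$ with $\sigma' \in S^{n-2}$ and $\theta \in [0,\pi]$, so that $d\sigma = \sin^{n-2}\theta\, d\theta\, d\sigma'$ and $e_n \cdot \sigma = \cos\theta$, I obtain
$$\lambda_m Y_m(e_n) = \int_0^\pi f(\cos\theta) \sin^{n-2}\theta \left( \int_{S^{n-2}} Y_m((\sin\theta)\sigma',\cos\theta)\, d\sigma' \right) d\theta.$$
The inner integral is the average of $Y_m$ along the parallel at height $\cos\theta$; by the standard reproducing property of zonal spherical harmonics (equivalently, the addition formula), it equals $|S^{n-2}|\, Y_m(e_n)\, P_m(\cos\theta)$, where $P_m$ is the ultraspherical polynomial of degree $m$ in dimension $n$, normalized by $P_m(1)=1$ (Muller's convention for the Legendre polynomials). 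Substituting and changing variables $t=\cos\theta$ yields
$$\lambda_m Y_m(e_n) = |S^{n-2}|\, Y_m(e_n) \int_{-1}^1 f(t) P_m(t) (1-t^2)^{(n-3)/2}\, dt,$$
and since at least one $Y_m \in \mathcal{H}_m$ satisfies $Y_m(e_n) \ne 0$ (take the zonal harmonic with pole $e_n$), this identifies $\lambda_m$ as claimed.

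The main obstacle is the zonal averaging identity invoked in the penultimate step, which is really what encodes the appearance of $P_m$ in Funk--Hecke. A self-contained justification would require either quoting the addition theorem for spherical harmonics, or characterizing the unique (up to scalar) $O(n{-}1)$-invariant element of $\mathcal{H}_m$ with pole $e_n$ and identifying it with $P_m(\sigma \cdot e_n)$ through the orthogonality relations of the Gegenbauer polynomials. Once this identity is available, the remaining ingredients -- rotation invariance of $T_f$, invariance of $\mathcal{H}_m$, and Schur's lemma -- are entirely formal.
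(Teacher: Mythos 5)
The paper does not prove this statement at all: it is quoted verbatim from M\"uller's book as a known classical result, so there is no ``paper's own proof'' to compare against. Judged on its own terms, your argument is the standard representation-theoretic proof of the Funk--Hecke formula and is essentially correct: rotation invariance of $T_f$, preservation of each $\mathcal{H}_m$, Schur's lemma to get a scalar $\lambda_m$, and evaluation at the pole to identify $\lambda_m$ via the substitution $t=\cos\theta$, which correctly produces the weight $(1-t^2)^{(n-3)/2}$ and the factor $|S^{n-2}|$. Two points could be tightened. First, your justification that $T_f$ maps $\mathcal{H}_m$ into itself ``as $T_f$ commutes with the Beltrami--Laplace operator'' is itself a claim needing proof (and for merely continuous $f$ one would have to approximate); the cleaner route is to note that for $k\neq m$ the orthogonal projection of $T_f|_{\mathcal{H}_m}$ onto $\mathcal{H}_k$ is an $O(n)$-intertwiner between inequivalent irreducibles, hence zero --- this is the same Schur argument you already invoke and makes the Laplacian unnecessary. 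Second, as you honestly flag, the zonal averaging identity $\frac{1}{|S^{n-2}|}\int_{S^{n-2}}Y_m((\sin\theta)\sigma',\cos\theta)\,d\sigma'=Y_m(e_n)P_m(\cos\theta)$ is the real content of the theorem; it is exactly the addition theorem (or the characterization of the unique zonal element of $\mathcal{H}_m$), and without supplying it the proof is not self-contained. Since the paper itself simply cites M\"uller, your level of detail is more than adequate for the role the statement plays here, but be aware that the flagged identity is a genuine dependency, not a formality.
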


\begin{corollary}
 For every spherical harmonic $Y_m$ of degree $m$, $m=0,1,2,...$, and every $\omega \in S^{n-1}$,
 \begin{equation}
 \label{Spherical Harmonics are evectors of cosine transform}
  \textswab{C}Y_m(\omega)=\lambda_mY_m(\omega)
 \end{equation}
where $\lambda_m$ is given as in Funk-Hecke Formula for $f(t)=|t|$.
\end{corollary}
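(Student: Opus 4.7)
The plan is a one-line application of the Funk--Hecke formula (Theorem \ref{Funk-Hecke}) to the kernel $f(t)=|t|$, which is continuous on $[-1,1]$ and hence satisfies the hypothesis of the formula. Starting from the definition of the cosine transform, I would write
\begin{equation*}
\textswab{C}Y_m(\omega) \;=\; \frac{1}{|S^{n-1}|}\int_{S^{n-1}} Y_m(\sigma)\,|\omega\cdot\sigma|\,d\sigma,
\end{equation*}
recognize the right-hand integral as exactly the object computed by Funk--Hecke with $f(t)=|t|$, and conclude
\begin{equation*}
\textswab{C}Y_m(\omega) \;=\; \frac{1}{|S^{n-1}|}\,\mu_m\,Y_m(\omega),
\qquad
\mu_m \;=\; |S^{n-2}|\int_{-1}^{1} |t|\,P_m(t)\,(1-t^2)^{(n-3)/2}\,dt,
\end{equation*}
so that $\lambda_m=\mu_m/|S^{n-1}|$ is precisely the Funk--Hecke eigenvalue (up to the sphere-area normalization built into the definition of $\textswab{C}$).

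There is essentially no obstacle: the corollary is a direct specialization of Theorem \ref{Funk-Hecke}, and the only point requiring a brief comment is the normalization convention, namely that the factor $|S^{n-1}|^{-1}$ from the definition of $\textswab{C}$ is absorbed into the eigenvalue $\lambda_m$ (which the corollary statement permits via the phrase ``given as in Funk--Hecke Formula''). If desired, one could also note that $\lambda_m=0$ whenever $m$ is odd, since $|t|P_m(t)(1-t^2)^{(n-3)/2}$ is then an odd function on $[-1,1]$; this is consistent with the fact that $\textswab{C}$ vanishes on odd spherical harmonics, reflecting the evenness of the integrand $|\omega\cdot\sigma|$.
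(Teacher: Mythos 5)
Your proof is correct and is exactly the argument the paper intends: the corollary is stated as an immediate specialization of the Funk--Hecke formula to the kernel $f(t)=|t|$, with no separate proof given. Your side remark about the normalization is a fair catch --- read literally, the eigenvalue in the corollary should be the Funk--Hecke constant divided by $|S^{n-1}|$, owing to the $\frac{1}{|S^{n-1}|}$ prefactor in the definition of the cosine transform.
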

Now, we can establish the following relation.
\begin{prop}
 For every spherical harmonic $Y_m$ of degree $m$,
 \begin{equation}\label{sph_har_rel}
  \int\limits_{S^{n-1}} \int\limits_0^\pi Cf(u,\beta,\psi)Y_m(\beta) \sin\psi d\psi d\beta =  \pi \lambda_m \int\limits_{S^{n-1}} Rf(\omega,\omega \cdot u)Y_m(\omega)d\omega.
 \end{equation}
In particular, for $m=0$, we obtain \eqref{BPR_rel}.
\end{prop}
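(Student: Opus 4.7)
The plan is to combine the lemma stated immediately above (the identity \eqref{integration wrt beta}) with the eigenfunction property of the cosine transform on spherical harmonics \eqref{Spherical Harmonics are evectors of cosine transform}. Concretely, I would apply \eqref{integration wrt beta} with the choice $g(\beta)=Y_m(\beta)$, noting that $Y_m\in L^1(S^{n-1})$, and then substitute $\textswab{C}Y_m(\omega)=\lambda_m Y_m(\omega)$ on the right-hand side. Pulling the scalar $\lambda_m$ out of the integral gives exactly \eqref{sph_har_rel}. No further manipulation is needed, so the proof is essentially a one-line specialization.

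The only slightly subtle point to check is the claim about $m=0$. There $Y_0$ is a nonzero constant (which cancels from both sides), and the eigenvalue is
\begin{equation*}
\lambda_0 = |S^{n-2}|\int_{-1}^1 |t|\,(1-t^2)^{(n-3)/2}\,dt = \frac{2|S^{n-2}|}{n-1},
\end{equation*}
by the Funk--Hecke formula with $f(t)=|t|$ and $P_0(t)=1$. Substituting this into the right-hand side of \eqref{sph_har_rel} and recognizing the spherical integral of $Rf(\omega,\omega\cdot u)$ as $R^{\#}Rf(u)$ reproduces the constant $\frac{2\pi}{n-1}\frac{|S^{n-2}|}{|S^{n-1}|}\cdot|S^{n-1}|$ appearing on the right-hand side of \eqref{BPR_rel}; the duplication formula \eqref{duplication fmla} then puts this constant in the form $\frac{\pi\Gamma(n)}{2^{n-1}\Gamma^2(\frac{n+1}{2})}$ used there. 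So the ``in particular'' statement is just a direct bookkeeping check.

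There is no genuine obstacle here: the heavy lifting was already done in Theorem \ref{Integral Relation} (which expresses the $\sin\psi$-weighted cone integral as a cosine transform of a Radon transform) and in the corollary that makes the cosine transform diagonal on spherical harmonics. The proposition is simply the combination of these two facts, and the $m=0$ reduction is a constant-matching exercise that I would briefly verify using \eqref{area of nsphere} and \eqref{duplication fmla}.
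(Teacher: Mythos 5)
Your main argument is exactly the paper's proof: set $g=Y_m$ in \eqref{integration wrt beta} and invoke the eigenfunction relation \eqref{Spherical Harmonics are evectors of cosine transform}; the paper's own proof is this same one-line specialization.

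The one place you go astray is the $m=0$ bookkeeping. Your value $\lambda_0=\frac{2|S^{n-2}|}{n-1}$ is the literal Funk--Hecke constant, so your right-hand side becomes $\pi\lambda_0 R^{\#}Rf(u)=\frac{2\pi|S^{n-2}|}{n-1}R^{\#}Rf(u)$. But the constant actually appearing in \eqref{BPR_rel} is $\frac{2\pi}{n-1}\frac{|S^{n-2}|}{|S^{n-1}|}$, which (as the $\Gamma$-form $\frac{\pi\Gamma(n)}{2^{n-1}\Gamma^2(\frac{n+1}{2})}$ given there confirms) does \emph{not} carry the extra factor $|S^{n-1}|$ you insert when you write it as $\frac{2\pi}{n-1}\frac{|S^{n-2}|}{|S^{n-1}|}\cdot|S^{n-1}|$. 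The source of the mismatch is the normalization $\frac{1}{|S^{n-1}|}$ built into the definition of $\textswab{C}$: with the literal Funk--Hecke constant one has $\textswab{C}Y_m=\frac{\lambda_m}{|S^{n-1}|}Y_m$, so the eigenvalue of the cosine transform is $\lambda_m/|S^{n-1}|$ rather than $\lambda_m$. With that convention your $m=0$ computation gives $\frac{\pi}{|S^{n-1}|}\cdot\frac{2|S^{n-2}|}{n-1}=\frac{2\pi}{n-1}\frac{|S^{n-2}|}{|S^{n-1}|}$, which does match \eqref{BPR_rel}. So the reduction to \eqref{BPR_rel} is fine once you keep track of which normalization of $\lambda_m$ is in force; as written, your check forces the match by an unjustified factor of $|S^{n-1}|$ rather than by identifying where that factor comes from.
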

\begin{proof}
Letting $g=Y_m$ in \eqref{integration wrt beta}, and using \eqref{Spherical Harmonics are evectors of cosine transform}, we get \eqref{sph_har_rel}. Then, the equation \eqref{BPR_rel} follows from direct calculation.
\end{proof}
\begin{remark}
As the relation \eqref{sph_har_rel} gives the spherical harmonics coefficients of the function $Rf(\omega, u\cdot \omega)$, one can recover it for all $u \in \mathbb{R}^n$ and $\omega \in S^{n-1}$. Then, any inversion formula for the Radon transform \eqref{inverse_radon} would reconstruct the function $f$.
This can be considered as an analog of Cormack's method \cite{Natt_old}.
\end{remark}

\section{Proofs of Some Auxiliary Statements}
\subsection{An Integral Relation for the Radon Transform}
\begin{lemma}
For any $f \in \mathcal{S}(\mathbb{R}^n)$, $u \in \mathbb{R}^n$, and $p \in \mathbb{R}$,
\begin{equation}
\label{Asgeirsson}
 \int\limits_{S^{n-1}}Rf(\omega,p+u \cdot \omega) d\omega=|S^{n-2}| \int\limits_{S^{n-1}} \int\limits_{|p|}^\infty f(u+r\omega)(r^2-p^2)^{(n-3)/2}rdrd\omega.
\end{equation}
\end{lemma}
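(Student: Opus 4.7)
The plan is to reduce to the case $u=0$ by translation invariance of the Radon transform, apply Fubini to swap the order of integration, and then evaluate the resulting inner integral on $S^{n-1}$ by reducing to a one-dimensional Dirac integral.

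For the reduction, recall that $R(T_uf)(\omega,s)=Rf(\omega,s+\omega\cdot u)$, so writing $g=T_uf$ gives $Rf(\omega,p+\omega\cdot u)=Rg(\omega,p)$, while $g(r\omega)=f(u+r\omega)$. Both sides of \eqref{Asgeirsson} therefore transform in parallel under $f\mapsto g$, and it suffices to prove the $u=0$ case
$$\int_{S^{n-1}} Rf(\omega,p)\,d\omega = |S^{n-2}|\int_{S^{n-1}}\int_{|p|}^\infty f(r\omega)(r^2-p^2)^{(n-3)/2}r\,dr\,d\omega.$$

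For the main computation, I would write the hyperplane integral as $Rf(\omega,p)=\int_{\mathbb{R}^n} f(x)\,\delta(\omega\cdot x-p)\,dx$, swap the order of integration, and isolate the kernel $K(x):=\int_{S^{n-1}}\delta(\omega\cdot x-p)\,d\omega$. This $K$ is rotation-invariant in $x$, so for $|x|=r$ I may take $x=re_n$ and decompose $\omega=(\sqrt{1-t^2}\,\sigma',t)$ with $\sigma'\in S^{n-2}$ and $t\in[-1,1]$, under which $d\omega=(1-t^2)^{(n-3)/2}\,dt\,d\sigma'$. Integrating out the delta at $t=p/r$ gives $K(x)=|S^{n-2}|(r^2-p^2)^{(n-3)/2}/r^{n-2}$ for $r>|p|$ and $K(x)=0$ otherwise. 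Switching to polar coordinates $x=r\omega$, $dx=r^{n-1}\,dr\,d\omega$, the $r^{n-2}$ in the denominator cancels part of the Jacobian and leaves exactly the right hand side of \eqref{Asgeirsson}.

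The only place where real care is needed is the Fubini swap and the precise interpretation of the inner Dirac integral. It can be made fully rigorous either by mollifying $\delta$ and invoking the Schwartz decay of $f$ for dominated convergence, or more cleanly by parametrising the hyperplane as $x=p\omega+y$ with $y\in\omega^\perp$ and applying a change of variables on the incidence manifold $\{(\omega,x)\in S^{n-1}\times\mathbb{R}^n:\omega\cdot x=p\}$. Low-dimensional cases cause no trouble: for $n=2$ the weight $(1-t^2)^{-1/2}$ is integrable at the endpoints, and $|S^0|=2$ is read as counting measure, fully consistent with the formula.
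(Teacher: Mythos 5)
Your proof is correct. The paper reaches the same identity by a closely related but differently organized computation: after the same reduction to $u=0$ by shift invariance, it replaces $f$ by its spherical mean $F(r)=|S^{n-1}|^{-1}\int_{S^{n-1}}f(r\omega)\,d\omega$, uses the fact that the Radon transform commutes with spherical averaging so that $|S^{n-1}|^{-1}\int_{S^{n-1}}Rf(\xi,p)\,d\xi=RF(\omega,p)$, and then computes the Radon transform of the radial function $F$ explicitly by parametrizing the hyperplane $\{x\cdot\omega=p\}$ and substituting $r=\sqrt{p^2+t^2}$, which produces the weight $(r^2-p^2)^{(n-3)/2}r$ and the factor $|S^{n-2}|$. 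You instead keep $f$ general, interchange the $\omega$- and $x$-integrations, and evaluate the spherical average of the hyperplane delta, $K(x)=\int_{S^{n-1}}\delta(\omega\cdot x-p)\,d\omega=|S^{n-2}|(r^2-p^2)^{(n-3)/2}r^{2-n}$ for $r=|x|>|p|$, which is the Fubini-dual of the paper's calculation; the same kernel appears in both, and your support condition $r>|p|$ matches the lower limit $|p|$ of the paper's $r$-integral. The paper's route has the minor advantage of avoiding distributions entirely (everything is an iterated Lebesgue integral of a Schwartz function), while yours isolates the geometric content in a single rotation-invariant kernel and makes the origin of the weight transparent; your suggested ways of making the delta-function step rigorous (mollification with dominated convergence, or a change of variables on the incidence manifold) are exactly what is needed, and your check of the endpoint integrability for $n=2$ addresses the only delicate case.
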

\begin{proof}
Due to the shift invariance of the Radon transform, it suffices to prove the lemma for $u=0$ only. Let $F$ be the spherical mean-value of $f$, i.e.,
$$F(r)=\frac{1}{|S^{n-1}|}\int\limits_{S^{n-1}}f(r\omega) d\omega.$$

The rotational invariance of the Radon transform implies that it commutes with the spherical mean-value operator. Thus,
$$\hat{F}(p):= RF(\omega, p)= \frac{1}{|S^{n-1}|} \int\limits_{S^{n-1}} Rf(\xi,p)d\xi.$$

On the other hand, if $\{\omega,\omega_1^\bot,...,\omega_{n-1}^\bot\}$ is an orthonormal system in $\mathbb{R}^n$,
\begin{align*}
\label{}
\hat{F}(p)&=\int\limits_{-\infty}^\infty \cdots  \int\limits_{-\infty}^\infty F(p\omega+t_1\omega_1^\bot+\cdots+ t_{n-1}\omega_{n-1}^\bot)dt_1...dt_{n-1} \\
    &=\int\limits_{-\infty}^\infty \cdots  \int\limits_{-\infty}^\infty F\left(\sqrt{p^2+t_1^2+\cdots t_{n-1}^2}\right)dt_1...dt_{n-1}
\end{align*}
as $F$ is radial. Letting $x=t_1\omega_1^\bot+\cdots+ t_{n-1}\omega_{n-1}^\bot$, we have
\begin{align*}
\int\limits_{-\infty}^\infty \cdots  \int\limits_{-\infty}^\infty F\left(\sqrt{p^2+t_1^2+\cdots t_{n-1}^2}\right)dt_1...dt_{n-1}
&=\int\limits_{\mathbb{R}^{n-1}} F(\sqrt{p^2+|x|^2})dx\\
&=|S^{n-2}|\int\limits_0^\infty F(\sqrt{p^2+t^2})t^{n-2}dt.
\end{align*}

Finally, letting $r=\sqrt{p^2+t^2}$, we obtain
\begin{align*}
\int\limits_0^\infty F(\sqrt{p^2+t^2})t^{n-2}dt &= \int\limits_{|p|}^\infty F(r)(r^2-p^2)^{(n-3)/2} r dr\\
 &=\frac{1}{|S^{n-1}|} \int\limits_{S^{n-1}} \int\limits_{|p|}^\infty f(r\omega) (r^2-p^2)^{(n-3)/2} r dr d\omega.
\end{align*}

Hence, the result follows.
\end{proof}
\begin{corollary}[\cite{Natt_old}]\label{BPR}
Letting $p=0$ in \eqref{Asgeirsson}, we obtain
 \begin{align}
  R^\#Rf(u)&=\int\limits_{S^{n-1}}Rf(\omega,u \cdot \omega) d\omega \nonumber
  =|S^{n-2}| \int\limits_{S^{n-1}} \int\limits_{0}^\infty f(u+r\omega)r^{n-2}drd\omega\\
  &=|S^{n-2}| \int\limits_{\mathbb{R}^n} f(u+x)|x|^{-1}dx= |S^{n-2}| (|x|^{-1} \ast f)(u).
 \end{align}
\end{corollary}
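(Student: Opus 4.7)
The plan is to specialize the integral identity \eqref{Asgeirsson} at $p=0$ and then rewrite the resulting expression in the two further equivalent forms claimed in the corollary. First I would set $p=0$ on both sides of \eqref{Asgeirsson}: on the right-hand side the weight $(r^2-p^2)^{(n-3)/2}r$ collapses to $r^{n-2}$ and the lower endpoint $|p|$ becomes $0$, while on the left-hand side the translate $p+u\cdot\omega$ reduces to $u\cdot\omega$. By the definition of the backprojection operator, $R^{\#}Rf(u)=\int_{S^{n-1}}Rf(\omega,u\cdot\omega)\,d\omega$, so the first and second equalities of the displayed chain follow immediately.

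The third equality is a passage from spherical to Cartesian coordinates. Writing $x=r\omega$ with $r=|x|$ and $\omega=x/|x|$, the Lebesgue volume element on $\mathbb{R}^n$ is $dx=r^{n-1}\,dr\,d\omega$, so that $r^{n-2}\,dr\,d\omega=|x|^{-1}\,dx$. This converts the iterated integral $\int_{S^{n-1}}\int_0^{\infty} f(u+r\omega)r^{n-2}\,dr\,d\omega$ into $\int_{\mathbb{R}^n}f(u+x)|x|^{-1}\,dx$. For the final equality, I would change variables $y=-x$; since $|x|^{-1}$ is an even function of $x$, this rewrites the integral as $\int_{\mathbb{R}^n}f(u-y)|y|^{-1}\,dy$, which is precisely $(|x|^{-1}\ast f)(u)$ by the definition of convolution.

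No real obstacle is to be expected: the corollary is a bookkeeping consequence of the preceding lemma together with standard polar-coordinate and convolution identities. The only point worth highlighting is that the evenness of the kernel $|x|^{-1}$ is what legitimizes the rewriting of the backprojection integral as a genuine convolution on $\mathbb{R}^n$, rather than a correlation.
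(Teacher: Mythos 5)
Your proposal is correct and follows exactly the route the paper intends: specialize \eqref{Asgeirsson} at $p=0$, identify the left side with $R^{\#}Rf(u)$, pass to Cartesian coordinates via $dx=r^{n-1}\,dr\,d\omega$, and use the evenness of $|x|^{-1}$ to write the result as a convolution. The paper states the corollary as an immediate consequence without writing these steps out, so your argument is simply the fully detailed version of the same computation.
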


\subsection{Proof of Proposition \ref{int_rel_vertical}}
\label{subsec:proof of prop}
We first prove the proposition for $n=2$. By definition of the 2-dimensional cone transform \eqref{2D_cone}, we have
\begin{align*}
\int\limits_0^\pi Cf(0, e_2,\psi)\sin \psi d\psi&= \int\limits_0^\pi \int\limits_0^\infty f(r\sin \psi, r\cos \psi)\sin\psi dr d\psi\\
&+\int\limits_0^\pi \int\limits_0^\infty f(-r\sin \psi, r\cos \psi)\sin\psi dr d\psi.
\end{align*}

Changing variables by letting $r \to -r$ and $\psi \to \pi-\psi$, respectively, we obtain
\begin{align*}
\int\limits_0^\pi \int\limits_0^\infty f(r\sin \psi, r\cos \psi)\sin\psi dr d\psi&=\int\limits_0^\pi \int\limits_{-\infty}^0 f(-r\sin \psi, -r\cos \psi)\sin\psi dr d\psi\\
&=\int\limits_0^\pi \int\limits_{-\infty}^0 f(-r\sin \phi, r\cos \phi)\sin\phi dr d\phi.
\end{align*}

Therefore,
\begin{align*}
\int\limits_0^\pi Cf(0, e_2,\psi)\sin \psi d\psi&=\int\limits_0^\pi \int\limits_{-\infty}^\infty f(-r\sin \psi, r\cos \psi)\sin\psi dr d\psi\\
&=\int\limits_0^\pi Rf(\omega(\psi),0)\sin\psi d\psi
\end{align*}
where $\omega(\psi):=(\cos \psi, \sin \psi)$. Now, the evenness property of the Radon transform implies that
\begin{align*}
\int\limits_0^\pi Rf(\omega(\psi),0)\sin\psi d\psi=\int\limits_0^\pi Rf(\omega(\psi+\pi),0)\sin\psi d\psi=-\int\limits_\pi^{2\pi} Rf(\omega(\phi),0)\sin\phi d\phi.
\end{align*}

Hence, we get
$$\int\limits_0^\pi Cf(0, e_2,\psi)\sin \psi d\psi=\frac{1}{2}\int\limits_0^{2\pi} Rf(\omega(\psi),0)|\sin\psi| d\psi=\frac{1}{2}\int\limits_{S^1} Rf(\omega,0) |\omega \cdot e_2| d\omega,$$
which is the equation \eqref{int rel vertical} for $n=2$.

In order to prove the proposition for $n \geq 3$, we need two auxiliary results.
\begin{lemma}\label{Integral Relation_CR}

For $\psi_0 \in (0, \pi/2)$, $\psi \in (0, \pi)$, and $n \geq 3$, we define
\begin{equation}\label{def of g}
g(\psi_0,\psi)=\frac{(\cos^2\psi_0-\cos^2\psi)^{(n-4)/2}}{(\sin\psi)^{n-3}}.
\end{equation}

Then, for any $f \in \mathcal{S}(\mathbb{R}^n)$,
\begin{equation}\label{Int_Rel_Lemma}
\int\limits_{\psi_0}^{\pi-\psi_0} Cf(0, e_n, \psi)g(\psi_0,\psi)d\psi= \frac{(\cos\psi_0)^{n-3}}{|S^{n-3}|}\int\limits_{S^{n-2}}Rf((\cos\psi_0)\omega,\sin \psi_0),0)d\omega.
\end{equation}
\end{lemma}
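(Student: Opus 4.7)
The plan is to show both sides of \eqref{Int_Rel_Lemma} reduce to the common Cartesian integral
$$I := \int_{\mathbb{R}^{n-1}}|x|^{3-n}\!\!\int_{|t|\le|x|\cot\psi_0}\!\! f(x,t)\bigl(|x|^2\cos^2\psi_0 - t^2\sin^2\psi_0\bigr)^{(n-4)/2}\,dt\,dx.$$

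For the left side, I will insert the spherical formula \eqref{nD_vertical cone} into $Cf(0,e_n,\psi)$, interchange order of integration (valid by Schwartz decay), and change the cone variables $(\rho,\psi)$ to $(r,t) = (\rho\sin\psi,\rho\cos\psi)$, whose Jacobian is $d\rho\,d\psi = dr\,dt/\rho$. The weight $g(\psi_0,\psi)$ is tailored so that, under the identity $\cos^2\psi_0 - \cos^2\psi = (r^2\cos^2\psi_0-t^2\sin^2\psi_0)/(r^2+t^2)$, the combination $(\rho\sin\psi)^{n-2}\cdot g(\psi_0,\psi)\cdot d\rho\,d\psi$ collapses to $r\bigl(r^2\cos^2\psi_0 - t^2\sin^2\psi_0\bigr)^{(n-4)/2}\,dr\,dt$, with $r\in(0,\infty)$ and $|t|\le r\cot\psi_0$. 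Passing from $(r,\omega)$ to $y = r\omega \in \mathbb{R}^{n-1}$ by $dr\,d\omega = |y|^{-(n-2)}\,dy$ then identifies the left-hand side of \eqref{Int_Rel_Lemma} with $I$.

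For the right side, I parameterize the hyperplane $\{\xi : \xi\cdot(\cos\psi_0\,\omega,\sin\psi_0) = 0\}$ as the graph $t = -(x\cdot\omega)\cot\psi_0$ over $x\in\mathbb{R}^{n-1}$, with surface element $dx/\sin\psi_0$, so that
$$Rf((\cos\psi_0\omega,\sin\psi_0),0) = \frac{1}{\sin\psi_0}\int_{\mathbb{R}^{n-1}} f(x,-(x\cdot\omega)\cot\psi_0)\,dx.$$
After a Fubini swap, for fixed $x\neq 0$ the integrand depends on $\omega\in S^{n-2}$ only through $u = \omega\cdot\hat x$, so the standard spherical slicing formula replaces the $\omega$-integral by $|S^{n-3}|\int_{-1}^1(\,\cdot\,)(1-u^2)^{(n-4)/2}\,du$. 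Substituting $t = -|x|(\cot\psi_0)u$ turns $(1-u^2)^{(n-4)/2}$ into $|x|^{4-n}\cos^{4-n}\psi_0\bigl(|x|^2\cos^2\psi_0 - t^2\sin^2\psi_0\bigr)^{(n-4)/2}$, and collecting the remaining factors $1/\sin\psi_0$ and $\tan\psi_0/|x|$ produces
$$\int_{S^{n-2}} Rf((\cos\psi_0\omega,\sin\psi_0),0)\,d\omega = |S^{n-3}|(\cos\psi_0)^{3-n}\,I.$$
Multiplying by $(\cos\psi_0)^{n-3}/|S^{n-3}|$ then gives $I$, matching the left side.

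The main obstacle is the bookkeeping for the two separate changes of variables, verifying that the identity $1-u^2 \leftrightarrow (|x|^2\cos^2\psi_0 - t^2\sin^2\psi_0)/(|x|^2\cos^2\psi_0)$ on the sphere side matches $\cos^2\psi_0 - \cos^2\psi \leftrightarrow (r^2\cos^2\psi_0 - t^2\sin^2\psi_0)/(r^2+t^2)$ on the cone side, with the correct powers of $|x|$, $\cos\psi_0$, and $\sin\psi_0$ aligning. For $n=3$ the exponent $(n-4)/2 = -1/2$ produces an integrable endpoint singularity in both representations and $|S^{n-3}|=|S^0|=2$, but the calculation proceeds unchanged. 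The Fubini interchanges are justified by the Schwartz decay of $f$ together with local integrability of the weights, itself apparent from the same changes of variables.
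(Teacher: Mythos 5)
Your proposal is correct. I checked the key computations: on the cone side, the polar change of variables $(\rho,\psi)\mapsto(r,t)=(\rho\sin\psi,\rho\cos\psi)$ with $d\rho\,d\psi=dr\,dt/\rho$ does make the weight $(\rho\sin\psi)^{n-2}g(\psi_0,\psi)/\rho$ collapse to $r\,(r^2\cos^2\psi_0-t^2\sin^2\psi_0)^{(n-4)/2}$ (all powers of $\rho$ cancel), and on the Radon side the graph parametrization with surface element $dx/\sin\psi_0$, the zonal slicing formula on $S^{n-2}$, and the substitution $t=-|x|u\cot\psi_0$ combine to give exactly $|S^{n-3}|(\cos\psi_0)^{3-n}I$, so the constants match. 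Your route is genuinely different from the paper's. The paper splits the angular integral at $\psi=\pi/2$ (because its substitution $z=\rho\cos\psi$ needs $\cos\psi>0$), performs nested substitutions $z=\rho\cos\psi$, $r=z\tan\psi$ on each half, invokes the auxiliary identity \eqref{Asgeirsson} applied to the slices $f_z$ to recognize $(n-1)$-dimensional Radon transforms, and then reassembles the full $n$-dimensional Radon transform via delta-function scaling identities. You instead reduce \emph{both} sides independently to a single common Cartesian integral $I$ over $\mathbb{R}^{n-1}\times\mathbb{R}$: one polar change of variables handles the whole angular range $(\psi_0,\pi-\psi_0)$ at once, and the zonal slicing formula on $S^{n-2}$ (which is essentially the same ingredient hidden inside the proof of \eqref{Asgeirsson}) handles the sphere integral of the Radon data. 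This buys a shorter, more symmetric argument with no case split and no distributional manipulations; the paper's version has the advantage of making the intermediate object --- the $(n-1)$-dimensional Radon transforms of horizontal slices --- explicit, which is conceptually tied to the "exhaustion by tangent hyperplanes" picture in Fig.~\ref{fig:geometry of proof}. Your remarks on the integrable endpoint singularity when $n=3$ and on justifying Fubini via Schwartz decay cover the only delicate analytic points.
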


\begin{proof} The idea of the proof is to exhaust the exterior volume of two opposite cones having a common vertex in two ways. The first is by taking a family of vertical cones whose vertices are at the origin and opening angles vary from $\psi_0$ to $\pi-\psi_0$. The second is to consider a family of hyperplanes passing through origin and are tangent to the vertical cone having vertex at the origin and opening angle $\psi_0$ (See Fig. \ref{fig:geometry of proof}).

\begin{figure}[H]
\begin{center}
\includegraphics[width=4in,height=2.7in]{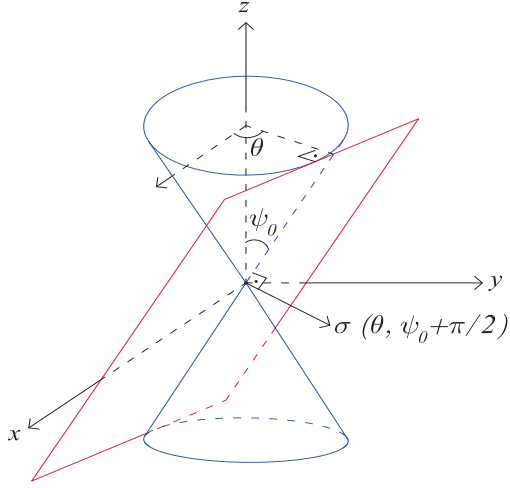}
\caption{Geometry of Lemma \ref{Integral Relation_CR}.}
\label{fig:geometry of proof}
\end{center}
\end{figure}

Let the functions $f$ and $g$ be given as in the lemma. We can split the integral on the left hand side of equation \eqref{Int_Rel_Lemma} into two parts to get
\begin{align} \label{int_split}
\int\limits_{\psi_0}^{\pi-\psi_0} Cf(0, e_n, \psi)g(\psi_0,\psi)d\psi&=\int\limits_{\psi_0}^{\pi/2} Cf(0, e_n, \psi)g(\psi_0,\psi)d\psi \nonumber\\
&+\int\limits_{\pi/2}^{\pi-\psi_0} Cf(0, e_n, \psi)g(\psi_0,\psi)d\psi.
\end{align}

By the definition of the vertical cone transform \eqref{nD_vertical cone}, for the first term on the right hand side, then
\begin{align*}
\int\limits_{\psi_0}^{\pi/2}&Cf(0,e_n,\psi)g(\psi_0,\psi)d\psi \\
&= \int\limits_{\psi_0}^{\pi/2} \int\limits_0^{\infty} \int\limits_{S^{n-2}}f(\rho (\sin\psi) \omega, \rho \cos \psi)(\rho \sin\psi)^{n-2}g(\psi_0,\psi)d\omega d\rho d\psi.
\end{align*}

If we make a change of variables in the integral with respect to $\rho$ by letting $z= \rho \cos \psi$, we have
\begin{align*}
\int\limits_{\psi_0}^{\pi/2}&Cf(0,e_n,\psi)g(\psi_0,\psi)d\psi \\
&= \int\limits_{\psi_0}^{\pi/2} \int\limits_0^{\infty} \int\limits_{S^{n-2}}f(z\tan\psi \omega,z)(z\tan\psi)^{n-2}g(\psi_0,\psi)d\omega \frac{dz}{\cos\psi} d\psi.
\end{align*}

Now, if we let $r=z\tan \psi$, then $dr=z \sec^2 \psi d\psi$, and since
\begin{align*}
\cos^2\psi_0-\cos^2\psi=\frac{\sec^2\psi-\sec^2\psi_0}{\sec^2\psi_0 \sec^2\psi}=\frac{\tan^2\psi-\tan^2\psi_0}{\sec^2\psi_0 \sec^2\psi} = \frac{r^2-z^2\tan^2\psi_0}{z^2\sec^2\psi_0 \sec^2\psi},
\end{align*}
we have $g(\psi_0,\psi(r,z))=\frac{(r^2-z^2\tan^2\psi_0)^{(n-4)/2}}{(r \sec\psi_0)^{n-4}}.$

Thus, 
\begin{align*}
 \int\limits_{\psi_0}^{\pi/2}&Cf(0,e_n,\psi)g(\psi_0,\psi)d\psi \\
 &= (\cos\psi_0)^{n-4}\int\limits_0^\infty \int\limits_{S^{n-2}}\int\limits_{z\tan\psi_0}^{\infty} f_z(r\omega)(r^2-z^2\tan^2\psi_0)^{(n-4)/2}r dr d\omega dz.
\end{align*}

Then, using the identity \eqref{Asgeirsson}, we obtain the following relation between the cone transform of $f$ and $(n-1)$-dimensional Radon transform of $f_z$.
\begin{align*}
\int\limits_{\psi_0}^{\pi/2}Cf(0,e_n,\psi)&g(\psi_0,\psi)d\psi=\frac{(\cos\psi_0)^{n-4}}{|S^{n-3}|} \int\limits_0^\infty \int\limits_{S^{n-2}} Rf_z(\omega,-z\tan\psi_0) d\omega dz\\
&=\frac{(\cos\psi_0)^{n-4}}{|S^{n-3}|} \int\limits_0^\infty \int\limits_{S^{n-2}} \int\limits_{\mathbb{R}^{n-1}} f_z(\bar{x}) \delta(\bar{x}\cdot \omega+z\tan\psi_0)d\bar{x} d\omega dz.
\end{align*}

Now, since $\delta(\lambda (u-a))=\lambda^{-1}\delta(u-a)$, we get
\begin{align}\label{upperhalf}
\int\limits_{\psi_0}^{\pi/2}&Cf(0,e_n,\psi)g(\psi_0,\psi)d\psi \nonumber \\
&=\frac{(\cos\psi_0)^{n-3}}{|S^{n-3}|} \int\limits_{S^{n-2}}\int\limits_0^\infty \int\limits_{\mathbb{R}^{n-1}} f(\bar{x},z) \delta(\bar{x}\cdot (\cos\psi_0)\omega+z\sin\psi_0)d\bar{x}dz d\omega.
\end{align}

For the second term of the right hand side of \eqref{int_split}, we change the variable $\psi$ by $\pi-\psi$ to get
\begin{align*}
\int\limits_{\pi/2}^{\pi-\psi_0}&Cf(0,e_n,\psi)g(\psi_0,\psi)d\psi =\int\limits_{\psi_0}^{\pi/2}Cf(0,e_n,\pi-\psi)g(\psi_0,\pi-\psi)d\psi\\
&= \int\limits_{\psi_0}^{\pi/2} \int\limits_0^{\infty} \int\limits_{S^{n-2}}f(\rho (\sin\psi) \omega, -\rho \cos \psi)(\rho \sin\psi)^{n-2}g(\psi_0,\psi)d\omega d\rho d\psi.
\end{align*}

Again we change variables first by letting $z=\rho \cos \psi$ and then $r=z\tan \psi$ to obtain
\begin{align*}
\int\limits_{\pi/2}^{\pi-\psi_0}&Cf(0,e_n,\psi)g(\psi_0,\psi)d\psi\\
&= (\cos\psi_0)^{n-4}\int\limits_0^\infty \int\limits_{S^{n-2}}\int\limits_{z\tan\psi_0}^{\infty} f_{-z}(r\omega)(r^2-z^2\tan^2\psi_0)^{(n-4)/2}r dr d\omega dz\\
&=\frac{(\cos\psi_0)^{n-4}}{|S^{n-3}|} \int\limits_0^\infty \int\limits_{S^{n-2}} Rf_{-z}(\omega, z\tan\psi_0) d\omega dz,
\end{align*}
where the last equality follows from the identity \eqref{Asgeirsson}.  Again, by the definition of the Radon transform, and $\delta(\lambda (u-a))=\lambda^{-1}\delta(u-a)$, we get
\begin{align*}
\int\limits_0^\infty \int\limits_{S^{n-2}} &Rf_{-z}(\omega, z\tan\psi_0) d\omega dz\\
&=\cos\psi_0\int\limits_{S^{n-2}}\int\limits_0^\infty \int\limits_{\mathbb{R}^{n-1}} f(\bar{x},-z) \delta(\bar{x}\cdot (\cos\psi_0)\omega-z\sin\psi_0)d\bar{x}dz d\omega\\
&=\cos\psi_0\int\limits_{S^{n-2}}\int\limits_{-\infty}^0 \int\limits_{\mathbb{R}^{n-1}} f(\bar{x},z) \delta(\bar{x}\cdot (\cos\psi_0)\omega+z\sin\psi_0)d\bar{x}dz d\omega.
\end{align*}

Thus,
\begin{align}\label{lowerhalf}
\int\limits_{\pi/2}^{\pi-\psi_0}&Cf(0,e_n,\psi)g(\psi_0,\psi)d\psi \nonumber\\
&=\frac{(\cos\psi_0)^{n-3}}{|S^{n-3}|}\int\limits_{S^{n-2}}\int\limits_{-\infty}^0 \int\limits_{\mathbb{R}^{n-1}} f(\bar{x},z) \delta(\bar{x}\cdot (\cos\psi_0)\omega+z\sin\psi_0)d\bar{x}dz d\omega.
\end{align}

Now, using \eqref{upperhalf} and \eqref{lowerhalf} for the first and second terms in the equation \eqref{int_split}, we obtain
\begin{align*}
\int\limits_{\psi_0}^{\pi-\psi_0} &Cf(0, e_n, \psi)g(\psi_0,\psi)d\psi\\
&=\frac{(\cos\psi_0)^{n-3}}{|S^{n-3}|} \int\limits_{S^{n-2}}\int\limits_{\mathbb{R}^n} f(x) \delta(x\cdot ((\cos\psi_0)\omega,\sin\psi_0))dx d\omega.
\end{align*}

Finally, observing that
 $$\int\limits_{\mathbb{R}^n} f(x) \delta(x\cdot ((\cos\psi_0)\omega,\sin\psi_0))dx=Rf(((\cos\psi_0)\omega,\sin\psi_0),0),$$
we have
 \begin{align*}
\int\limits_{\psi_0}^{\pi-\psi_0} Cf(0, e_n, \psi)g(\psi_0,\psi)d\psi= \frac{(\cos\psi_0)^{n-3}}{|S^{n-3}|}\int\limits_{S^{n-2}}Rf((\cos\psi_0)\omega,\sin \psi_0),0)d\omega.
\end{align*}

Hence, we get the result.
\end{proof}

\begin{lemma}
\label{Derivative of Integral}
Assume that $n \geq 3$. Let $g(\psi_0,\psi)$ be given as in \eqref{def of g} and define
$$h(\psi_0,\psi)=\frac{(\cos^2\psi_0-\cos^2\psi)^{(n-2)/2}}{(\sin\psi)^{n-3}}.$$
Then,
\begin{align*}
\frac{d}{d\psi_0}\int\limits_{\psi_0}^{\pi-\psi_0}&Cf(0,e_n,\psi)h(\psi_0,\psi)d\psi\\
&= (2-n)\cos \psi_0 \sin\psi_0  \int\limits_{\psi_0}^{\pi-\psi_0}Cf(0,e_n,\psi)g(\psi_0,\psi)d\psi.
\end{align*}
\end{lemma}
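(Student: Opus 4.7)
The natural approach is to apply the Leibniz rule for differentiation of an integral with respect to a parameter when the limits also depend on that parameter. Writing $F(\psi):=Cf(0,e_n,\psi)$, I would start from
$$\frac{d}{d\psi_0}\int_{\psi_0}^{\pi-\psi_0} F(\psi)\,h(\psi_0,\psi)\,d\psi = -F(\pi-\psi_0)\,h(\psi_0,\pi-\psi_0) - F(\psi_0)\,h(\psi_0,\psi_0) + \int_{\psi_0}^{\pi-\psi_0} F(\psi)\,\partial_{\psi_0} h(\psi_0,\psi)\,d\psi.$$

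The first step is to observe that both boundary contributions vanish. At $\psi=\psi_0$ and at $\psi=\pi-\psi_0$ we have $\cos^2\psi_0-\cos^2\psi=0$ (for the second endpoint because cosine is even under $\psi\mapsto\pi-\psi$ up to squaring), while the exponent $(n-2)/2$ is strictly positive for $n\geq 3$. Hence $h(\psi_0,\psi_0)=h(\psi_0,\pi-\psi_0)=0$, and the two endpoint contributions drop out.

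The second step is a direct chain-rule calculation of the surviving integrand:
$$\partial_{\psi_0} h(\psi_0,\psi) = \frac{n-2}{2}(\cos^2\psi_0-\cos^2\psi)^{(n-4)/2}(-2\cos\psi_0\sin\psi_0)(\sin\psi)^{-(n-3)} = -(n-2)\cos\psi_0\sin\psi_0\, g(\psi_0,\psi),$$
which, inserted into the Leibniz identity, immediately yields the claimed formula.

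The only nonroutine point I expect is justifying the interchange of differentiation and integration when $\partial_{\psi_0} h$ has an integrable endpoint singularity. This actually occurs only in dimension $n=3$, where $g(\psi_0,\psi)\sim(\cos^2\psi_0-\cos^2\psi)^{-1/2}$ near $\psi=\psi_0$ and $\psi=\pi-\psi_0$. The cleanest way to sidestep this is the substitution $u=\cos\psi$, $u_0=\cos\psi_0$, rewriting the left-hand side as
$$\int_{-u_0}^{u_0} F(\arccos u)\,\frac{(u_0^2-u^2)^{(n-2)/2}}{(1-u^2)^{(n-2)/2}}\,du,$$
whose integrand is jointly continuous in $(u_0,u)$ for $n\geq 3$ and vanishes at $u=\pm u_0$. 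Differentiation in $u_0$ is then standard (the derivative is dominated by an integrable function uniformly on a neighborhood of any fixed $\psi_0$), and converting back via $\frac{d}{d\psi_0}=-\sin\psi_0\frac{d}{du_0}$ and undoing the substitution reproduces exactly the expression on the right of the lemma.
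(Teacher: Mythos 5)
Your proposal is correct and follows essentially the same route as the paper: apply the Leibniz integral rule, observe that the boundary terms vanish since $h(\psi_0,\psi_0)=h(\psi_0,\pi-\psi_0)=0$, and compute $\partial_{\psi_0}h=(2-n)\cos\psi_0\sin\psi_0\, g(\psi_0,\psi)$. Your additional care in justifying differentiation under the integral sign for $n=3$ (where $g$ has an integrable endpoint singularity) goes beyond the paper's one-line argument and is a welcome refinement.
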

\begin{proof}
 As $\frac{\partial h}{\partial \psi_0}(\psi_0,\psi)=(2-n)\cos \psi_0 \sin\psi_0 g(\psi_0,\psi)$,  utilizing Leibniz integral rule and noticing that $h(\psi_0,\pi-\psi_0)=h(\psi_0,\psi_0)=0$  gives the result.
\end{proof}

\emph{Proof of Proposition \ref{int_rel_vertical}, $n \geq 3$}. By Lemmas \ref{Derivative of Integral} and \ref{Integral Relation_CR}, we have
\begin{align*}
 \frac{d}{d\psi_0}\int\limits_{\psi_0}^{\pi-\psi_0}&Cf(0,e_n,\psi)h(\psi_0,\psi)d\psi\\
&= (2-n)\cos \psi_0 \sin\psi_0  \int\limits_{\psi_0}^{\pi-\psi_0}Cf(0,e_n,\psi)g(\psi_0,\psi)d\psi\\
&=\frac{(2-n)sin\psi_0(\cos\psi_0)^{n-2}}{|S^{n-3}|}\int\limits_{S^{n-2}}Rf(((\cos\psi_0)\omega,\sin \psi_0),0)d\omega.
\end{align*}

Integrating both sides with respect to $\psi_0$ from $0$ to $\pi/2$, we obtain
\begin{align}\label{zero to half pi}
\begin{split}
\int\limits_0^{\pi}&Cf(0,e_n,\psi)\sin\psi d\psi\\
&=\frac{n-2}{|S^{n-3}|}\int\limits_0^{\pi/2}\int\limits_{S^{n-2}}Rf(((\cos\psi_0)\omega,\sin \psi_0),0)d\omega \sin\psi_0(\cos\psi_0)^{n-2} d\psi_0\\
&=\frac{n-2}{|S^{n-3}|}\int\limits_0^{\pi/2}\int\limits_{S^{n-2}}Rf(((\sin \phi) \omega,\cos \phi),0) \cos\phi(\sin\phi)^{n-2} d\omega d\phi,
\end{split}
\end{align}
where we changed the variable by letting $\phi=\frac{\pi}{2}-\psi_0$. On the other hand, letting $\phi=\psi_0+\frac{\pi}{2}$, we have
\begin{align*}
 \int\limits_0^{\pi}Cf&(0,e_n,\psi)\sin\psi d\psi\\
&=\frac{n-2}{|S^{n-3}|}\int\limits_0^{\pi/2}\int\limits_{S^{n-2}}Rf(((\cos\psi_0)\omega,\sin \psi_0),0)d\omega \sin\psi_0(\cos\psi_0)^{n-2} d\psi_0\\
&=\frac{n-2}{|S^{n-3}|}\int\limits_{\pi/2}^\pi \int\limits_{S^{n-2}}Rf(((\sin \phi) \omega,-\cos \phi),0) (-\cos\phi)(\sin\phi)^{n-2} d\omega d\phi.
\end{align*}

Now, due to evenness of Radon transform, we have
\begin{align*}
 Rf(((\sin \phi) \omega,&-\cos \phi),0)=Rf(((-\sin \phi) (-\omega),-\cos \phi),0)\\
 &=Rf(-((\sin \phi) (-\omega),\cos \phi),0)=Rf(((\sin \phi)(-\omega),\cos \phi),0).
\end{align*}
Since the Lebesgue measure is rotation invariant, we obtain
\begin{align}\label{half pi to pi}
\begin{split}
 \int\limits_0^{\pi}Cf&(0,e_n,\psi)\sin\psi d\psi\\
&=\frac{n-2}{|S^{n-3}|}\int\limits_{\pi/2}^\pi \int\limits_{S^{n-2}}Rf(((\sin \phi )\omega,\cos \phi),0) (-\cos\phi)(\sin\phi)^{n-2} d\omega d\phi.
\end{split}
\end{align}

Summing \eqref{zero to half pi} and \eqref{half pi to pi}, we conclude that
\begin{align*}
 \int\limits_0^{\pi}Cf&(0,e_n,\psi)\sin\psi d\psi\\
&=\frac{n-2}{2|S^{n-3}|}\int\limits_0^\pi \int\limits_{S^{n-2}}Rf(((\sin \phi)\omega,\cos \phi),0) |\cos\phi|(\sin\phi)^{n-2} d\omega d\phi\\
&=\frac{n-2}{2|S^{n-3}|}\int\limits_{S^{n-1}}Rf(\sigma,0) |\sigma\cdot e_n|d\sigma.
\end{align*}

Finally, application of the formula \eqref{area of nsphere} and $\Gamma(z+1)=z\Gamma(z)$ gives the result.\qed

\section{Conclusions and Remarks}\label{sec:Remarks}

In this paper, various relations between the general (overdetermined) cone transform and Radon and cosine transforms and spherical harmonic expansions are explored.
Several inversion formulas for the cone transform are obtained, some of which of filtered backprojection nature. Examples of reconstructions from synthetic Compton camera data are provided.

Some additional remarks: 
\begin{itemize}
\item In order not to distract from the main point, the source intensity distribution function $f$ is assumed to be of the Schwartz class, $\mathcal{S}(\mathbb{R}^n)$. In fact, the cone transform of $f$ is well-defined even when we assume integrability of $f$ on each cone. The formulas obtained here can be extended by continuity to much larger function spaces. For instance, for the inversion formula \eqref{Inversion} to hold, it is sufficient that the function $Cf$ is $(n-1)$-times differentiable with respect to $u$, and to this end it suffices to assume the function $f$ be $(n-1)$-times differentiable. As a condition of decaying, assuming that $f(x)=\mathcal{O}(|x|^{-N})$ for some $N >n$, is sufficient.
\item Although we do not explicitly present the adjoint of the cone transform, both Theorem \ref{Inversion Theorem1} and Theorem \ref{Inversion Thm} provide filtered back projection type inversion formulas for the cone transform as, in both cases, we recover the function at a point $u$ using a weighted averaging of its cone transform over cones having vertex at $u$.
\item Let us address the comparison of inversion formulas of Theorems \ref{Inversion Theorem1} and \ref{Inversion Thm}. Both of them involve integrating the data with respect to $\psi$ and $\beta$ and filtration by the same Riesz potential. The difference is that in Theorem \ref{Inversion Theorem1} the measure of integration is $\mu(\beta)d\psi d\beta$ with arbitrary function $\mu$ of mass $1$ (e.g., a $\delta$-function), while the formula of Theorem \ref{Inversion Thm} holds only for the measure $\sin(\psi)d\psi d\beta$.
\end{itemize}

\section{Acknowledgements}
The author is grateful to P. Kuchment who provided insight and expertise that greatly assisted the research in this paper. The author is also thankful to Y. Hristova, L. Kunyansky, S. Moon and B. Rubin for helpful comments, discussions, and references. Finally, the author is grateful to the referees for careful review of the paper and for the comments, corrections and suggestions that lead to significant improvements of the paper. This work was partially supported by the NSF DMS grant 1211463.

\end{document}